\newtheorem{theorem}{Theorem}
\newtheorem{lemma}{Lemma}
\newtheorem{proposition}{Proposition}
\begin{document}

\title[Mean square of the product of $L$-functions and Dirichlet polynomials]{The mean square of  the product of a Dirichlet $L$-function and a Dirichlet polynomial}
\author {B. Conrey, H. Iwaniec, and K. Soundararajan}
\address{American Institute of Mathematics, San Jose, CA 95112, USA \\ 
\newline  School of Mathematics, University of Bristol, Bristol BS8 1TW, UK}  
\email{conrey@aimath.org} 
\address{Department of Mathematics, Rutgers University, Piscataway, NJ 08854, USA}
\email{iwaniec@math.rutgers.edu} 
\address{Department of Mathematics, Stanford University, Stanford, CA 94305, USA} 
\email{ksound@stanford.edu} 
 \thanks{The first author is partially supported by the NSF, and by a Programme grant from the EPSRC.   The second author is partially supported by the NSF grant DMS 1406981.  The third author is partially supported by the NSF, and a Simons Investigator grant from the Simons Foundation}  
 
\maketitle

\section{Introduction}  
\noindent  This paper is motivated by work of Balasubramanian, Conrey, and Heath-Brown \cite{BCH-B1985} who established 
that 
\begin{equation}
\label{1.1} 
\int_0^T |A(1/2+it)\zeta(1/2+it)|^2 ~dt
=T\sum_{h,k\le T^{\vartheta}} \frac{\lambda_h\overline{\lambda_k}(h,k)}{{hk}}
\Big( \log \frac{T(h,k)^2}{2\pi h k}+2\gamma-1\Big) +o(T), 
\end{equation}
where  
$$ 
A(s) =\sum_{h\le T^{\vartheta}} \frac{\lambda_h}{h^s}
$$
is a Dirichlet polynomial with length $T^{\vartheta}$ for any fixed $\vartheta <1/2$, and the coefficients $\lambda_h$ satisfy $\lambda_h \ll h^{\epsilon}$.  In \cite{BCH-B1985}, it was conjectured that \eqref{1.1} holds for longer Dirichlet polynomials $A$, again with coefficients $\lambda_h \ll h^{\epsilon}$, and with length $T^{\vartheta}$ with any fixed $\vartheta <1$.  This conjecture implies the Lindel{\" o}f hypothesis, and also that at least $3/5$ of the zeros of the Riemann zeta-function are on the critical line.  
Recent work of Bettin, Chandee, and Radziwill \cite{BCR2014} establishes \eqref{1.1} for Dirichlet polynomials of length $T^{\vartheta}$ provided $\vartheta < \frac12+0.01515$.

One can formulate a more general version of this conjecture, by introducing ``shifts."   Let $T$ be large, and suppose $\alpha$ and 
$\beta$ are two small complex numbers both $\ll 1/\log T$.   Let $h$ and $k$ be natural numbers, and $\psi$ a fixed compactly supported smooth function on ${\Bbb R}^+$.  Then, generalizing \eqref{1.1}, one may conjecture that 
\begin{align} 
\label{1.2}  
\int_0^\infty &\psi\Big(\frac t T\Big) \zeta(\tfrac 12+ it+\alpha)\zeta(\tfrac 12 -it +\beta)(h/k)^{it} ~dt \nonumber \\
&= \int_0^\infty \psi\Big(\frac t T\Big)\Big(\frac{(h,k)^{1+\alpha+\beta}}{h^{1/2+\beta}k^{1/2+\alpha}}
\zeta(1+\alpha+\beta) 
+\Big(\frac t{2\pi}\Big)^{-\alpha-\beta}
\frac{(h,k)^{1-\alpha-\beta}}{h^{1/2-\alpha}k^{1/2-\beta}}\zeta(1-\alpha-\beta)\Big)~dt \nonumber \\ 
&\hskip 1 in +E_{h,k}(T)
\end{align}
where the remainder terms $E_{h,k}(T)$ satisfy 
$$
\sum_{h,k\le T^{\vartheta}}\frac {\lambda_h\overline{\lambda_k}}{\sqrt{hk}}E_{h,k}(T)=o(T)
$$
provided the coefficients $\lambda_h$ are $\ll h^\epsilon$, and $\vartheta <1$.   When $\alpha+\beta=0$, the main term in \eqref{1.2} 
should be interpreted in the sense of a limit.   The introduction of the shifts $\alpha$ and $\beta$ permits the main terms to be 
expressed in a more transparent way, and also allows (via Cauchy's formula) one to deduce variants for derivatives of $\zeta(s)$.

In this paper we consider analogues of the conjectures \eqref{1.1} and \eqref{1.2} in the context of Dirichlet $L$-functions 
to a large modulus $q$.   It has long been known that such $q$-analogues behave similarly to the situation of the zeta-function 
in $t$-aspect (for example, see Selberg \cite{Selberg1946}, Iwaniec and Sarnak \cite{IwaSar1997}, Conrey
\cite{Conrey2007}, and Young \cite{Young2011} among many other papers).    Further, in the context of Dirichlet $L$-functions, we 
can enlarge the family by averaging also over the moduli $q$.   Indeed, it is not difficult to use  the large sieve to prove that 
$$
\sum_{q\le Q}  ~ \sideset{}{^\star}\sum_{\chi \bmod q} |L(1/2,\chi)|^2 |A(1/2,\chi)|^2 \ll Q^{2+\epsilon}, 
$$
where the sum is over primitive characters $\chi$, and 
$$
A(1/2,\chi) = \sum_{n\le Q^{\vartheta}} \lambda_n \frac{\chi(n)}{\sqrt{n}}
$$ 
is a Dirichlet polynomial with $ \lambda_n \ll n^{\epsilon}$ and $\vartheta <1$.   This gives an upper bound of (roughly) the right order of magnitude, while (as in 
conjecture \eqref{1.2}) we seek an asymptotic formula.   In this paper, we use the asymptotic large sieve,  
developed by the authors in \cite{CIS2012, CIS2013, CIS2013a}, to establish a $q$-analogue of the Balasubramanian, Conrey, and Heath-Brown conjecture when averaged also over $q$.

 We now describe more precisely our main result.   Let $\chi \bmod q$ be an even primitive character, and 
 $$
 L(s,\chi) = \sum_{n=1}^{\infty} \chi(n)n^{-s}
 $$
 denote 
the corresponding Dirichlet $L$-function.   The completed $L$-function 
\begin{equation} 
\label{2.1} 
\Lambda(\tfrac 12 + s,\chi) = \Big( \frac{q}{\pi}\Big)^{\frac s2} \Gamma\Big( \frac 14+\frac s2\Big) L(\tfrac 12 +s,\chi), 
\end{equation} 
satisfies the functional equation (with $|\epsilon(\chi)| =1$) 
\begin{equation} 
\label{2.2} 
\Lambda(\tfrac 12+s) = \epsilon(\chi) \Lambda(\tfrac 12-s,\overline{\chi}). 
\end{equation}

Let $W$ denote a fixed $C^{\infty}$ function, compactly supported on $[1,2]$, and let $\alpha$ and $\beta$ be ``shifts."  Our 
goal is to evaluate 
\begin{equation} 
\label{2.3} 
\Delta_{\alpha,\beta}(h,k;Q) :=\sum_q  W\big(\frac qQ\big) \sideset{}{^\flat}\sum_{\chi \bmod q}\Lambda(\tfrac 12+\alpha,\chi)
\Lambda(\tfrac 12 +\beta,\overline{\chi}) \chi(h)\overline{\chi}(k), 
 \end{equation}
 where the $\flat$ indicates that the sum is restricted to even primitive characters.   The restriction to even 
 characters is purely for convenience, so that the $\Gamma$-factors in the functional equation have the same 
 shape, and one can consider in the same way odd primitive characters.  Our interest is in the situation where 
 the shifts $\alpha$ and $\beta$ are small, (precisely, $\alpha$, $\beta \ll 1/\log Q$) but it may be possible to relax this 
 and allow the shifts to be large (see \cite{Bettin2010}).

\begin{theorem} \label{thm1}  Let $Q$ be large, and suppose the shifts $\alpha$ and $\beta$ are $\ll 1/\log Q$.   Then 
\begin{align*}
\Delta_{\alpha,\beta}(h,k) &= \sum_{\substack{ (q,hk)=1}} W\Big( \frac qQ\Big) \Big( \sideset{}{^\flat}\sum_{\chi \bmod q} 1 \Big) \Big( 
\Big(\frac q\pi \Big)^{\frac{\alpha+\beta}{2}} \Gamma\Big( \frac 14+\frac \alpha2\Big) \Gamma\Big(\frac 14+\frac\beta 2 \Big) \frac{(h,k)^{1+\alpha+\beta}}{h^{\frac 12+\beta} k^{\frac 12 +\alpha}} \zeta_q (1+\alpha+\beta) \\
&+ 
\Big( \frac q \pi \Big)^{\frac{-\alpha-\beta}{2} }\Gamma\Big( \frac 14-\frac \alpha 2\Big) \Gamma\Big( \frac 14 -\frac \beta 2 \Big) \frac{(h,k)^{1-\alpha-\beta}}{h^{\frac 12 -\alpha} k^{\frac 12-\beta}} \zeta_q(1-\alpha-\beta)\Big) + {\mathcal E}_{h,k}, 
\end{align*} 
where 
$$ 
\zeta_q(s) = \zeta(s) \prod_{p|q} \Big(1 -\frac{1}{p^s}\Big), 
$$ 
and the remainder terms ${\mathcal E}_{h,k}$ satisfy 
$$
\sum_{h,k\le Q^{\vartheta}} \frac{\lambda_h\overline{\lambda_k}}{\sqrt{hk}} \mathcal{E}_{h,k}= O(Q^{2 -(1-\vartheta)/2 +\epsilon}), 
 $$
uniformly for   arbitrary complex numbers $\lambda_h$ with $\lambda_h \ll h^{\epsilon}$, and $\vartheta <1$.
\end{theorem}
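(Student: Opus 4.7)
The plan is to follow the standard route for moments of Dirichlet $L$-functions: first turn the completed $L$-functions into finite Dirichlet series, then execute the character average, and finally analyze the resulting arithmetic sum over $q$. Concretely, I would begin with an approximate functional equation for the product $\Lambda(\tfrac 12+\alpha,\chi)\Lambda(\tfrac 12+\beta,\overline\chi)$, writing it as the sum of two pieces, each a Dirichlet series of length $\approx q$. The "unswapped" piece is
$$
\Gamma\Bigl(\tfrac14+\tfrac\alpha2\Bigr)\Gamma\Bigl(\tfrac14+\tfrac\beta2\Bigr)\Bigl(\tfrac q\pi\Bigr)^{\frac{\alpha+\beta}{2}}\sum_{m,n}\frac{\chi(m)\overline\chi(n)}{m^{1/2+\alpha}n^{1/2+\beta}}\,V_{\alpha,\beta}\!\Bigl(\frac{mn}{q}\Bigr),
$$
and the "swapped" piece is the analogue with $\alpha,\beta$ replaced by $-\beta,-\alpha$ and $\chi,\overline\chi$ exchanged, produced via the functional equation \eqref{2.2}. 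Restricting to even characters makes the two root numbers $\epsilon(\chi)$ coming from \eqref{2.2} cancel after averaging, leaving the two pieces manifestly symmetric in shape.

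Next I would carry out the average over even primitive characters $\chi \bmod q$ by Möbius-inverting the primitivity condition and using the basic orthogonality relation, which produces a condition $mh\equiv \pm nk\pmod q$. The $+$ and $-$ signs come from the evenness restriction, and both produce structurally identical sums. Splitting into the diagonal $mh=nk$ (with $(mn,q)=1$) and the off-diagonal $mh-nk=q\ell\neq 0$, the diagonal in the unswapped piece factors as
$$
\sum_{\substack{mh=nk\\(mn,q)=1}}\frac{1}{m^{1/2+\alpha}n^{1/2+\beta}} \;=\; \frac{(h,k)^{1+\alpha+\beta}}{h^{1/2+\beta}k^{1/2+\alpha}}\,\zeta_q(1+\alpha+\beta),
$$
after writing $h=(h,k)h'$, $k=(h,k)k'$ and setting $m=k't$, $n=h't$. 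The cutoff $V_{\alpha,\beta}$ is removed by a contour shift exploiting the absence of a pole in $\zeta_q(1+\alpha+\beta)/s$ at $s=0$ together with the weight $W(q/Q)$, giving precisely the first main term; the swapped piece yields the second main term by the same argument.

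The off-diagonal analysis, where $mh-nk=q\ell$ with $\ell\neq 0$, is the heart of the matter. After inserting the weights $\lambda_h\overline\lambda_k/\sqrt{hk}$ and $W(q/Q)$ and expanding, one obtains a sum over $m,n,h,k,q,\ell$ subject to the shifted equation $mh=nk+q\ell$. The proof will invoke the asymptotic large sieve developed in \cite{CIS2012,CIS2013,CIS2013a}: Poisson summation in the modulus $q$ converts the congruence average into a sum of Kloosterman fractions, and the bilinear-form estimates for such fractions (together with Weil-type pointwise bounds on Kloosterman sums) convert the $q$-average into genuine cancellation, as against the trivial point-by-point treatment which would only give $Q^2$. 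The combinatorial bookkeeping then needs to be threaded through the $(h,k)$ sums of length up to $Q^\vartheta$ each, and the final saving of $Q^{(1-\vartheta)/2}$ should emerge from balancing the ranges of the dual variables against the length of the character sum.

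The main obstacle is clearly the off-diagonal estimate. When $\vartheta$ is close to $1$, the conductor $hk$ is nearly $Q^2$, so the shifted equation $mh=nk+q\ell$ has very little rigidity left; no individual character sum, no Weil bound on its own, and no standard large sieve inequality is strong enough. The saving $Q^{(1-\vartheta)/2}$ can only be secured by exploiting the additional averaging over $q\sim Q$ in a way that is compatible with the arithmetic of the equation, which is precisely the design criterion of the asymptotic large sieve. I expect that the technical core of the argument will be a careful application of the bilinear forms in Kloosterman fractions from \cite{CIS2013,CIS2013a}, together with stationary-phase analysis of the transforms of $W$ and $V_{\alpha,\beta}$ arising from the Poisson step, in order to keep uniform control in the shifts $\alpha,\beta\ll 1/\log Q$.
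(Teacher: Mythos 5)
Your outline matches the paper up to the orthogonality step, but it goes astray in two important places.

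First, the claim that the diagonal $mh=nk$ "gives precisely the first main term" (with the swapped piece giving the second) is only correct when $\vartheta<1/2$. The actual diagonal contribution (the paper's \eqref{4.9}) is a contour integral whose $s=0$ residue is the first main term, but there is a leftover integral on the $\mathrm{Re}(s)=\epsilon$ line which is not small once $h,k$ can be as large as $Q^{\vartheta}$ with $\vartheta$ near $1$. The paper has to exhibit an exact cancellation: the diagonal $\mathcal D_{\alpha,\beta}$ pairs with a piece of the off-diagonal (the term $\mathcal U^2_{-\beta,-\alpha}$ in \eqref{8.13}) to reproduce exactly that residue, while a separate potentially large term $\mathcal L^0_{\alpha,\beta}$ cancels against another piece $\mathcal U^1_{\alpha,\beta}$ of the off-diagonal. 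Treating the diagonal as already giving the main term skips the entire bookkeeping in Sections 4, 7 and 8.

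Second, the mechanism you propose for the off-diagonal — Poisson summation in $q$, Kloosterman fractions, Weil bounds — is not the mechanism used here, and on its own it would not deliver the required power saving uniformly for $\vartheta<1$. What the paper actually does: after M\"obius inverting primitivity one has $q=cd$ with $d\mid(mh\pm nk)$, and the argument bifurcates on the size of $c$ against a parameter $C$. For $c>C$ (so $d$ small, at most $Q/C$), the congruence is detected by Dirichlet characters mod $d$ and the non-principal contribution is controlled by a fourth-moment large sieve (Lemma~\ref{lem5.1}). For $c\le C$ and $mh\neq nk$, the key step is not Poisson in $q$ but the \emph{switch to the complementary divisor}: writing $|mh\pm nk|=g\,eab\,\ell$ replaces the large divisor $d=eab$ by the small complementary $\ell$; one then detects $ae\ell\mid(mh\pm nk)/g$ by Dirichlet characters mod $ae\ell$ and bounds the non-principal contribution via a hybrid large sieve inequality (Proposition~\ref{prop1}). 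No Kloosterman sums or exponential sums are ever introduced, and the final saving $Q^{(1-\vartheta)/2}$ comes from optimizing the threshold $C$ between the two large-sieve bounds ($Q^{2+\epsilon}/C$ from the $\mathcal L$ side and $CQ^{1+\vartheta+\epsilon}$ from the $\mathcal U$ side), not from Weil-type cancellation. Without the $\mathcal L/\mathcal D/\mathcal U$ split, the complementary-divisor swap, and the explicit matching of the principal-character main terms, the argument does not close.
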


In the statement of the theorem, and throughout the paper, $\epsilon$ will stand for an positive number that 
may be taken arbitrarily small, and its value may change from line to line.   Thus, in the remainder terms in Theorem \ref{thm1} 
we have obtained a power saving whenever $\vartheta$ is fixed below $1$.    
In the situation $\alpha=\beta=0$ (as in the original formulation of the Balasubramanian, Conrey, and Heath-Brown conjecture), a little calculation shows that 
$$ 
\sum_{q} W\Big( \frac{q}{Q} \Big) \sideset{}{^\flat}\sum_{\chi \bmod q} |L(\tfrac 12,\chi)|^2 \chi(h)\overline{\chi}(k) 
$$ 
has for its main term 
$$ 
\sum_{(q,hk)=1} W\Big(\frac qQ\Big) \Big( \sideset{}{^\flat}\sum_{\chi \bmod q} 1 \Big) \frac{\phi(q)}{q} \frac{(h,k)}{\sqrt{hk}} \Big( 
\log \frac{q(h,k)^2}{\pi hk}  +2 \gamma + \frac{\Gamma^{\prime}}{\Gamma}\Big( \frac 14\Big) + 2\sum_{p|q} \frac{\log p}{p-1} \Big), 
$$ 
with the error terms being controlled on average as in Theorem \ref{thm1}.

In the main theorem, we have kept the main term in a natural form, which also suggests that such 
an asymptotic formula holds for each individual $q$ and not just on average over $q$.   But one can readily give an 
asymptotic version of the main terms in Theorem \ref{thm1} where the sum over $q$ has been executed.  
To state this cleanly, we need a little more notation.   For brevity, we shall write 
 \begin{equation} 
 \label{2.4} 
 W_{\alpha,\beta}(x) = x^{1+\frac{\alpha+\beta}{2}} W(x), 
 \end{equation} 
 and write its Mellin transform as 
 \begin{equation} 
 \label{2.5} 
 {\widetilde W}_{\alpha,\beta}(s) = \int_0^{\infty} W_{\alpha,\beta}(x) x^s \frac{dx}{x}.  
 \end{equation} 
 For any complex number $s$, put 
 \begin{equation} 
 \label{2.6} 
 \Phi(q,s) = \prod_{p|q} \Big(1 -\frac{1}{q^s}\Big), 
 \end{equation} 
 and finally, define for complex numbers $s$ and $w$ with $\text{Re}(w) >0$ and Re$(s+w)>1$ 
 \begin{equation} 
 \label{2.7} 
 {\mathcal P}(q;w,s) = \prod_{p\nmid q} \Big(1 -\frac{1}{p^{s+w}} - \frac{2}{p^{1+w}} + \frac{2}{p^{1+s+w}} + \frac{1}{p^{2+2w}} - 
 \frac{1}{p^{2+2w+s}} \Big). 
 \end{equation} 
Then a small calculation allows us to recast the main term in Theorem \ref{thm1} as  
\begin{align} 
\label{1.10} 
 \frac{Q^2}{2}\Phi(hk,1)\bigg( &\widetilde{W}_{\alpha,\beta}(1) \Big( \frac{Q}{\pi} \Big)^{\frac{\alpha+\beta}{2}}  \Gamma\Big(\frac 14 +\frac \alpha 2\Big)
\Gamma\Big(\frac 14 +\frac \beta 2\Big)
  \frac{(h,k)^{1+\alpha+\beta}}{ h^{1/2+\beta}k^{1/2+\alpha}} \nonumber \\
  &\quad\quad\quad\quad \quad \times \zeta(1+\alpha+\beta)  \mathcal{P}(hk;1,1+\alpha+\beta) \nonumber \\
&+\widetilde{W}_{-\beta,-\alpha}(1)
    \Big( \frac{Q}{\pi} \Big)^{\frac{-\alpha-\beta}{2}}  
    \Gamma\Big(\frac 14 -\frac \alpha 2\Big)
\Gamma\Big(\frac 14 -\frac \beta 2\Big)
    \frac{(h,k)^{1-\beta-\alpha}}{ h^{1/2-\alpha}k^{1/2-\beta}} \nonumber\\ 
    &\quad\quad\quad\quad\quad\times
       \zeta(1-\alpha-\beta)  \mathcal{P}(hk;1,1-\beta-\alpha)\bigg),
\end{align}
and in fact it is in this form that we will establish Theorem \ref{thm1}.

As mentioned earlier, Theorem \ref{thm1} establishes a $q$-analogue of the conjecture of Balasubramanian, Conrey, and Heath-Brown, on 
average over $q$.   The extra average over $q$ means that one does not obtain improvements to the convexity bound for Dirichlet $L$-functions from Theorem 1.   However,  one can use Theorem \ref{thm1} to obtain a modest improvement to the result of \cite{CIS2013} 
on critical zeros of Dirichlet $L$-function.  More precisely,  let
$N(T,\chi)$ be the number of zeros of $L(s,\chi)$ whose real parts are positive and imaginary parts at most $T$, and 
let $N_0(T,\chi)$ be the number of such zeros that are on the critical line.
Then for $Q$ sufficiently large, and $(\log Q)^6 \le T \le (\log Q)^A$ (for any $A>6$) we have
\begin{equation*}
 \sum_q  W\big(\frac qQ\big) \sideset{}{^\flat}\sum_{\chi \bmod q}N_0(T,\chi)
> \frac35 \sum_q  W\big(\frac qQ\big) \sideset{}{^\flat}\sum_{\chi \bmod q}N(T,\chi).
\end{equation*}
The earlier result in \cite{CIS2013} gave $57 \%$ instead of $60 \%$ above, and the additional saving comes from the 
flexibility of allowing arbitrary coefficients in the mollifier, which permits the choice of mollifier introduced by Feng \cite{Feng2012}.  
 We also note one difference from our earlier work on the asymptotic large sieve in the context of moments of Dirichlet $L$-function 
 \cite{CIS2012}.  Namely, the earlier work required also a small average in $t$-aspect, whereas in the context of Theorem 1 we 
 are able to treat central values of $L$-functions without this extra $t$-averaging.

\section{Preliminary considerations}


\noindent Throughout, we shall assume that the shifts $\alpha$ and $\beta$ are non-zero, and $\alpha \neq \pm \beta$; the final answers will be uniformly continuous in $\alpha$ and $\beta$, so that the results will hold in the edge cases.   
We begin with an ``approximate functional equation'' for the $L$-functions in our average.  
Define 
\begin{equation} \label{3.1} 
\widetilde{V}_{\alpha,\beta}(s)= \Gamma\Big(\frac{s+1/2 +\alpha}{2}\Big)
\Gamma\Big(\frac{s+1/2 +\beta}{2}\Big) \Big(1 -\Big(\frac{2s}{\alpha+\beta}\Big)^2\Big), 
\end{equation}
which vanishes at $s= \pm \frac 12 (\alpha+\beta)$ simplifying some later calculations.  
For any positive real number $x$, put 
\begin{equation} 
\label{3.2} 
V_{\alpha,\beta} (x)= \frac{1}{2\pi i} \int_{(1)} \widetilde{V}_{\alpha,\beta}(s) x^{-s }~\frac{ds}{s}. 
\end{equation} 
The function $V_{\alpha,\beta}(x)$, which is a variant of the Mellin transform, essentially picks out values of $x$ that are of bounded size.  For large $x$, by moving the line of 
integration in \eqref{3.2} to the right we may establish that $V_{\alpha, \beta}(x) \ll \exp(-\tau x)$ for a suitable positive constant $\tau$.  For 
small $x$, by moving the line of integration in \eqref{3.2} to Re$(s) =-\frac 12+\epsilon$ we can establish that 
$$
V_{\alpha, \beta}(x) = \Gamma(\tfrac 14+\tfrac{\alpha}{2}) \Gamma(\tfrac 14 + \tfrac{\beta}{2}) + O(x^{\frac 12-\epsilon}).
$$
 We shall use these, 
and related facts about $V_{\alpha,\beta}(x)$ without further comment below.   

\begin{lemma} \label{lem1}  With notations as above 
$$ 
 \Lambda(\tfrac 12+\alpha,\chi)\Lambda(\tfrac 12+\beta,\overline{\chi}) 
= S(\alpha,\beta;\chi) + S(-\beta,-\alpha;{\chi})
$$ 
where 
\begin{equation} 
\label{3.3} 
S(\alpha,\beta;\chi)=
\Big(\frac q \pi\Big)^{\frac{\alpha+\beta}{2}}
\sum_{m,n}\frac{\chi(m)\overline{\chi(n)}}{m^{1/2+\alpha}n^{1/2+\beta}}V_{\alpha,\beta} \Big(\frac{\pi mn}{q}\Big). 
\end{equation}
\end{lemma}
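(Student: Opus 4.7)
The plan is to view $S(\alpha,\beta;\chi)$ as a Mellin--Barnes contour integral, shift the contour past its pole at $s=0$, and invoke the functional equation \eqref{2.2} to identify the shifted integral as $-S(-\beta,-\alpha;\chi)$.

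I would begin by substituting the representation \eqref{3.2} of $V_{\alpha,\beta}$ into the definition \eqref{3.3} of $S(\alpha,\beta;\chi)$ and interchanging the absolutely convergent sum over $(m,n)$ with the integral on $\mathrm{Re}(s)=1$. The inner sum collapses to $L(\tfrac12+\alpha+s,\chi)L(\tfrac12+\beta+s,\overline\chi)$, and the leftover factors of $q$ and $\pi$ combine with the two Gammas in $\widetilde V_{\alpha,\beta}(s)$ to reconstruct the completed $L$-functions via \eqref{2.1}. This yields the clean identity
\[
S(\alpha,\beta;\chi)=\frac{1}{2\pi i}\int_{(1)}\Lambda(\tfrac12+\alpha+s,\chi)\,\Lambda(\tfrac12+\beta+s,\overline\chi)\Big(1-\Big(\tfrac{2s}{\alpha+\beta}\Big)^{\!2}\Big)\frac{ds}{s}.
\]

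Next I would shift the contour to $\mathrm{Re}(s)=-1$. Since $\chi$ is primitive mod $q>1$, both completed $L$-functions are entire, and the polynomial factor is entire, so the only singularity between the two contours is the simple pole of $1/s$ at $s=0$, where the polynomial factor equals $1$ and the residue is exactly $\Lambda(\tfrac12+\alpha,\chi)\Lambda(\tfrac12+\beta,\overline\chi)$; rapid vertical decay of the Gamma factors in $\Lambda$ justifies the shift. On the new line I apply \eqref{2.2} to each factor, using that $\epsilon(\chi)\epsilon(\overline\chi)=1$ (a consequence of applying the functional equation twice), to turn the integrand into $\Lambda(\tfrac12-\alpha-s,\overline\chi)\Lambda(\tfrac12-\beta-s,\chi)$ times the same polynomial divided by $s$. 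The substitution $s\mapsto -s$ (the polynomial is even) together with the reversal of orientation returns the contour to $\mathrm{Re}(s)=1$ at the cost of an overall sign $-1$; undoing the Mellin step of the first paragraph in reverse, now with shifts $(-\alpha,-\beta)$ and character $\overline\chi$, identifies the shifted integral as $-S(-\alpha,-\beta;\overline\chi)$. Finally, a trivial relabeling $m\leftrightarrow n$ in \eqref{3.3}, combined with the symmetry $V_{\alpha,\beta}=V_{\beta,\alpha}$, gives $S(-\alpha,-\beta;\overline\chi)=S(-\beta,-\alpha;\chi)$, and rearranging the resulting identity yields the lemma.

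There is no substantive obstacle: this is a standard approximate-functional-equation manipulation, and the polynomial cut-off $1-(2s/(\alpha+\beta))^2$ plays no essential role at this stage (it is designed to cancel poles at $s=\pm(\alpha+\beta)/2$ that will appear only later, when the $\zeta(1\pm(\alpha+\beta))$ factors are extracted). The only point requiring genuine care is the sign-bookkeeping when the change of variable $s\mapsto -s$ is combined with the contour reversal; the assumption from the start of \S2 that $\alpha+\beta\neq 0$ ensures the polynomial factor is well-defined throughout.
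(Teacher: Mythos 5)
Your proposal is correct and follows essentially the same approach as the paper: both start from the Mellin--Barnes representation $S(\alpha,\beta;\chi)=\frac{1}{2\pi i}\int_{(1)}\Lambda(\tfrac12+\alpha+s,\chi)\Lambda(\tfrac12+\beta+s,\overline\chi)\big(1-(2s/(\alpha+\beta))^2\big)\,ds/s$, shift the contour to $\mathrm{Re}(s)=-1$ picking up the residue $\Lambda(\tfrac12+\alpha,\chi)\Lambda(\tfrac12+\beta,\overline\chi)$ at $s=0$, and apply the functional equation together with $s\mapsto-s$ to identify the remaining integral as $-S(-\beta,-\alpha;\chi)$. Your extra step passing through $S(-\alpha,-\beta;\overline\chi)$ and then relabeling $m\leftrightarrow n$ with the symmetry $V_{\alpha,\beta}=V_{\beta,\alpha}$ is a harmless detour; the paper simply reads off $S(-\beta,-\alpha;\chi)$ directly from the transformed integrand, which is the same object.
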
  
\begin{proof}  We sketch quickly the standard proof.   We start with 
\begin{equation} 
\label{3.35} 
\frac{1}{2\pi i} \int_{(1)} \Lambda(\tfrac 12+ s+\alpha,\chi) \Lambda(\tfrac 12+s+\beta,\overline{\chi}) \Big( 1- \Big(\frac{2s}{\alpha+\beta}\Big)^2\Big) \frac{ds}{s}. 
\end{equation} 
Since we are in the region of absolute convergence, expanding out $L(\tfrac 12+s+\alpha,\chi)L(\tfrac 12+s+\beta,\overline{\chi})$ into 
its Dirichlet series, and integrating term by term, this equals $S(\alpha,\beta;\chi)$.   On the other hand, we may move the 
line of integration to Re$(s) = -1$.  We encounter a pole at $s=0$ which leaves the residue $\Lambda(\tfrac 12+\alpha,\chi)\Lambda(\tfrac 12+\beta,\overline{\chi})$.   Thus the quantity in \eqref{3.35} also equals 
$$ 
\Lambda(\tfrac 12+\alpha,\chi) \Lambda(\tfrac 12+\beta,\overline{\chi}) + \frac{1}{2\pi i} \int_{(-1)} \Lambda(\tfrac 12 + s+\alpha,\chi) \Lambda(\tfrac 12+s+\beta,\overline{\chi}) \Big( 1- \Big(\frac{2s}{\alpha+\beta}\Big)^2\Big) \frac{ds}{s}. 
$$ 
Using the functional equation and replacing $s$ by $-s$, we see that the integral above equals $-S(-\beta, -\alpha;{\chi})$,  
which completes the proof of the lemma.  
\end{proof} 
 
The next lemma follows from the orthogonality relations for characters, adapted to handle even primitive characters.   
 \begin{lemma}
 \label{lem2} 
 If $(mn,q)=1$, then
\begin{equation*}\sideset{}{^\flat}\sum_{\chi\bmod q}\chi(m)\overline{\chi(n)}=
\frac 12\Big( \sum_{ \ d\mid q\atop d\mid (m-n)} \phi(d) \mu\Big(\frac qd\Big)+\sum_{ d\mid q\atop d\mid (m+n)} \phi(d)
\mu\Big(\frac qd\Big)\Big). 
\end{equation*}
 \end{lemma}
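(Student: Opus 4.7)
The plan is a direct computation combining a parity filter with the standard Möbius-inversion reduction from all characters to primitive ones. First I would detect even characters through the identity $\tfrac12(1+\chi(-1))$, so that
$$
\sideset{}{^\flat}\sum_{\chi\bmod q}\chi(m)\overline{\chi(n)}
=\frac12\sideset{}{^*}\sum_{\chi\bmod q}\bigl(\chi(m)+\chi(-m)\bigr)\overline{\chi(n)},
$$
where the $*$ denotes a sum over primitive characters mod $q$. This splits the problem into evaluating $\sideset{}{^*}\sum_{\chi\bmod q}\chi(a)\overline{\chi(n)}$ with $a=\pm m$; by symmetry it suffices to handle a generic $a$ with $(an,q)=1$.

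For the primitive sum, I would exploit the fact that every Dirichlet character mod $q$ is induced from a unique primitive character mod some $d\mid q$, and that the coprimality hypothesis $(an,q)=1$ guarantees the induced character agrees with the primitive one at both $a$ and $n$. Hence
$$
\sum_{\chi\bmod q}\chi(a)\overline{\chi(n)}=\sum_{d\mid q}\;\sideset{}{^*}\sum_{\chi\bmod d}\chi(a)\overline{\chi(n)},
$$
and Möbius inversion on this divisor relation gives
$$
\sideset{}{^*}\sum_{\chi\bmod q}\chi(a)\overline{\chi(n)}=\sum_{d\mid q}\mu(q/d)\sum_{\chi\bmod d}\chi(a)\overline{\chi(n)}.
$$
The inner full orthogonality collapses to $\phi(d)$ exactly when $d\mid(a-n)$, yielding
$$
\sideset{}{^*}\sum_{\chi\bmod q}\chi(a)\overline{\chi(n)}=\sum_{\substack{d\mid q\\ d\mid(a-n)}}\phi(d)\mu(q/d).
$$

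Specializing this to $a=m$ produces the $d\mid(m-n)$ sum and to $a=-m$ produces the $d\mid(m+n)$ sum; adding and dividing by two reproduces the formula stated in the lemma. There is no substantive obstacle: the argument is bookkeeping with character sums. The only point that needs a brief verification is that the assumption $(mn,q)=1$ transfers to $(an,d)=1$ for every $d\mid q$, so that the induction/orthogonality step applies uniformly across all divisors appearing in the Möbius inversion.
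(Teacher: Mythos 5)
Your proof is correct. The paper itself gives no proof of Lemma \ref{lem2}, merely remarking that it ``follows from the orthogonality relations for characters, adapted to handle even primitive characters,'' and your combination of the parity filter $\tfrac12(1+\chi(-1))$ with M{\"o}bius inversion over conductors and full orthogonality is exactly that standard argument.
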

 
 It will be convenient to adopt the notation $d | (m\pm n)$ to indicate either $d| (m+n)$ or $d|(m-n)$, and if 
 both possibilities occur then the variable $d$ is counted twice.  Thus, for example, we may combine the two sums 
 appearing in Lemma \ref{lem2} and write $\sum_{d|q, d|(m\pm n)} \phi(d) \mu(q/d)$.

Applying Lemmas \ref{lem1} and \ref{lem2} we obtain 
$$
\Delta_{\alpha,\beta}(h,k)=T_{\alpha,\beta}(h,k)+T_{-\beta,-\alpha}(h,k)
$$
where, writing $q=cd$, and recalling the notation \eqref{2.4}, 
\begin{equation}
\label{3.4} 
T_{\alpha,\beta}(h,k)= \frac Q2 \Big(\frac{Q}{\pi}\Big)^{\frac{\alpha+\beta}{2}} \sum_{m, n} \sum_{ (cd,mnhk)=1\atop d\mid mh\pm nk}
\frac{ \mu(c)\phi(d)}{cd} 
 \frac{1}{m^{\frac 12+\alpha}n^{\frac 12+\beta}}V_{\alpha,\beta}\Big(\frac{\pi mn}{cd}\Big)W_{\alpha,\beta}\Big(\frac{cd}{Q}\Big).
\end{equation}

Our goal is now to evaluate $T_{\alpha,\beta}(h,k)$.  
To this end, we introduce a parameter $C$ (which will eventually be chosen as a small power of $Q$) and split $T_{\alpha,\beta}(h,k)$
as 
$$T_{\alpha,\beta}(h,k)= \mathcal{L}_{\alpha,\beta}(h,k)+\mathcal{D}_{\alpha,\beta}(h,k)+\mathcal{U}_{\alpha,\beta}(h,k),
$$ 
depending on whether $c<C$, and whether $mh = nk$.  Precisely, 
we write 
\begin{equation}
\label{3.5} 
\mathcal{L}_{\alpha,\beta}(h,k)= \frac Q2 \Big(\frac{Q}{\pi}\Big)^{\frac{\alpha+\beta}{2}} 
\sum_{m, n} \sum_{ \substack{(cd,hkmn)=1 \\ {d\mid mh\pm kn} \\ { c>C } }} \frac{\mu(c)\phi(d)}{cd} 
 \frac{1}{m^{\frac 12 +\alpha}n^{\frac 12+\beta}}V_{\alpha,\beta}\Big(\frac{\pi mn}{cd}\Big)W_{\alpha,\beta}\Big(\frac{cd}{Q}\Big),
\end{equation}
\begin{equation}
\label{3.6}
 \mathcal{D}_{\alpha,\beta}(h,k)=\frac Q2 \Big(\frac{Q}{\pi}\Big)^{\frac{\alpha+\beta}{2}}  \sum_{\substack{ m, n \\ mh=nk }} \sum_{\substack{ (cd,hkmn)=1 \\ { c\le C} } }  
  \frac{\mu(c)\phi(d)}{cd}
 \frac{1}{m^{\frac 12+\alpha}n^{\frac 12+\beta}}V_{\alpha,\beta}\Big(\frac{\pi mn}{cd}\Big)W_{\alpha,\beta}\Big(\frac{cd}{Q}\Big),
\end{equation}
and
\begin{equation}
\label{3.7} 
\mathcal{U}_{\alpha,\beta}(h,k)= \frac Q2 \Big(\frac{Q}{\pi}\Big)^{\frac{\alpha+\beta}{2}} \sum_{\substack{ m, n \\ mh\neq nk}} \sum_{ \substack{ (cd,hkmn)=1 \\ {d\mid mh\pm kn} \\ c\le C  }} \frac{ \mu(c)\phi(d)}{cd} 
 \frac{1}{m^{\frac 12+\alpha}n^{\frac 12+\beta}}V_{\alpha,\beta}\Big(\frac{\pi mn}{cd}\Big)W_{\alpha,\beta}\Big(\frac{cd}{Q}\Big).
\end{equation}

In Section 3 we shall give an expression for the diagonal terms ${\mathcal D}_{\alpha,\beta}(h,k)$; when $\vartheta <1/2$ these terms 
account for the entire main term contribution in the main theorem, but for $\vartheta >1/2$ there are additional main terms arising from the 
off diagonal terms.   In Section 4 we treat the terms ${\mathcal L}_{\alpha,\beta}(h,k)$, isolating a potentially large main term.   The most difficult 
part of our analysis involves the terms ${\mathcal U}_{\alpha,\beta}(h,k)$, and this is carried out in Sections 5, 6, and 7.  Briefly, we must 
identify another large term which cancels precisely the corresponding contribution in ${\mathcal L}_{\alpha,\beta}(h,k)$, and then we are left with a 
new main term which when combined with the diagonal term ${\mathcal D}_{-\beta,-\alpha}(h,k)$ leads finally to the main term of the theorem.

\section{The diagonal terms $\mathcal{D}_{\alpha,\beta}(h,k)$}

\noindent This section gives a preliminary treatment of the diagonal terms ${\mathcal D}_{\alpha,\beta}(h,k)$, 
deriving a formula for this term which will be useful in conjunction with similar formulae arising from other main 
terms (yet to be identified).   If we write $h=H(h,k)$ and $k= K (h,k)$, so that $(H,K)=1$, and the relation $mh=nk$ implies that $m=K\ell$ and $n=H\ell$ for some natural number $\ell$.   Therefore 
\begin{equation} 
\label{4.1} 
\mathcal{D}_{\alpha,\beta}(h,k) = \frac Q2  \Big(\frac{Q}{\pi} \Big)^{\frac{\alpha+\beta}{2}} 
\sum_{\ell}  \sum_{ (cd,\ell hk)=1 \atop c\le C} \frac{ \mu(c)\phi(d) }{cd} 
 \frac{1}{\ell^{1+\alpha+\beta}H^{\frac 12+\beta}K^{\frac 12+\alpha}} V_{\alpha, \beta} 
\Big( \frac{\pi \ell^2 HK}{cd} \Big) W_{\alpha,\beta}\Big(\frac{cd}{Q}\Big).  
 \end{equation}

 We can extend the sum in $c$ to include all natural numbers $c$, incurring  an acceptable error term of size (using that $W$ is supported on $[1,2]$ and that $V_{\alpha,\beta}(x)$ is exponentially small for large $x$) 
\begin{equation*}
\ll \frac{Q}{(HK)^{\frac 12}} (\log Q) \sum_{c>C} \frac{1}{c} \sum_{d\le 2Q/c} \frac{\phi(d)}{d} \ll \frac{Q^2(\log Q)}{(HK)^{\frac 12} C}.
\end{equation*} 
If we sum this error term over all $h$ and $k$, we obtain 
\begin{equation} 
\label{4.2} 
\sum_{h, k \le Q^{\vartheta}} \frac{|\lambda_h \lambda_k|}{\sqrt{hk}} \frac{Q^2 \log Q}{\sqrt{HK}C} \ll \frac{Q^{2+\epsilon}}{C}. 
\end{equation}  
Since this is under control (we shall eventually choose $C= Q^{(1-\vartheta)/2}$), we shall omit this error term in our further discussion of ${\mathcal D}_{\alpha,\beta}(h,k)$.

Write 
\begin{equation} 
\label{4.3} 
\phi^*(q) = \sum_{cd = q } \mu(c)\phi(d), 
\end{equation}
which counts the number of primitive characters $\bmod \ q$.   By \eqref{4.1} and \eqref{4.2}, and grouping together terms with $cd=q$, we have (omitting the 
error term as mentioned above)
\begin{align} 
\label{4.4} 
\mathcal{D}_{\alpha,\beta}(h,k)  &=\frac{Q (Q/\pi)^{\frac{\alpha+\beta}{2}}}{2H^{\frac 12+\beta} K^{\frac 12+\alpha}} \sum_{\ell} \sum_{(q,\ell hk)=1}\frac{ \phi^*(q)}{q}  \frac{1}{\ell^{1+\alpha+\beta}} V_{\alpha,\beta} \Big( \frac{\pi \ell^2 HK}{q} \Big) W_{\alpha,\beta}\Big( \frac{q}{Q}\Big).
\end{align} 

Recalling the definition of $V_{\alpha,\beta}(x)$ (see \eqref{3.1} and \eqref{3.2}), we may express the sum over $\ell$ in the 
main term above as (for any $\epsilon >0$, and keeping in mind that $\alpha$ and $\beta$ are $\ll 1/\log Q$) 
$$ 
\frac{1}{2\pi i} \int_{(\epsilon)} {\widetilde V}_{\alpha,\beta}(s) \Big( \frac{q}{\pi HK}\Big)^{s} \zeta_q(1+2s+\alpha+\beta) \frac{ds}{s}, 
$$ 
where 
\begin{equation} 
\label{4.5} 
\zeta_q(s) = \prod_{p\nmid q} \Big(1- \frac 1{p^s}\Big)^{-1} = \zeta(s) \Phi(q,s), \qquad \text{with }  \qquad \Phi(q,s) = \prod_{p|q} \Big(1-\frac 1{p^s}\Big). 
\end{equation} 

To proceed further, and evaluate the sum over $q$ in \eqref{4.4}, the following lemma will be useful. 

\begin{lemma} 
\label{lem4.1} 
Suppose $s$ and $w$ are complex numbers with Re$(s)$ and Re$(w)$ larger than $1$.  Then 
$$ 
\sum_{(q,hk)=1} \frac{\phi^*(q)}{q^{1+w}} \zeta_q(s) = \zeta(w)\zeta(s) \Phi(hk,w) {\mathcal P}(hk;w,s), 
$$ 
where ${\mathcal P}(hk;w,s)$ was defined in \eqref{2.7}.
\end{lemma}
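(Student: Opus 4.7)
The plan is to prove Lemma \ref{lem4.1} by reducing it to an Euler product identity at each prime $p\nmid hk$. In the region where both sides converge absolutely, it suffices to match the local factors.

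First I would write $\zeta_q(s)=\zeta(s)\prod_{p\mid q}(1-p^{-s})$ so as to pull the factor $\zeta(s)$ outside the sum over $q$. Using the multiplicativity of $\phi^* = \mu * \phi$ together with the coprimality condition $(q,hk)=1$, the LHS becomes
$$\zeta(s)\prod_{p\nmid hk}F_p(w,s),\qquad F_p(w,s)=1+(1-p^{-s})\sum_{a\ge 1}\frac{\phi^*(p^a)}{p^{a(1+w)}}.$$
Simultaneously, one has $\zeta(w)\Phi(hk,w)=\prod_{p\nmid hk}(1-p^{-w})^{-1}$, so the RHS divided by $\zeta(s)$ factors as a product over the same primes. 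Hence the lemma is equivalent to the purely local identity
$$F_p(w,s)=\frac{1}{1-p^{-w}}\Big(1-p^{-s-w}-2p^{-1-w}+2p^{-1-s-w}+p^{-2-2w}-p^{-2-2w-s}\Big)$$
for every prime $p\nmid hk$.

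Second I would evaluate the geometric sum inside $F_p$ using $\phi^*(p)=p-2$ and $\phi^*(p^a)=p^{a-2}(p-1)^2$ for $a\ge 2$; equivalently, the classical identity $\sum_n \phi^*(n)n^{-1-w}=\zeta(w)/\zeta(1+w)^2$, whose local form at $p$ reads $\sum_{a\ge 0}\phi^*(p^a)p^{-a(1+w)}=(1-p^{-1-w})^2/(1-p^{-w})$. Substituting this closed form reduces the verification to a polynomial manipulation in $p^{-w}$ and $p^{-s}$.

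The main obstacle, as such, is purely bookkeeping: one expands $(1-p^{-s})\big[(1-p^{-1-w})^2/(1-p^{-w})-1\big]$ and collects the six resulting terms to match the six terms inside the parentheses defining $\mathcal{P}(hk;w,s)$. There is no analytic difficulty; the lemma is an elementary Euler product calculation.
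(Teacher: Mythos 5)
Your proposal is correct and takes essentially the same approach as the paper: both reduce the identity to matching Euler factors at each prime, with the paper writing the local sum as $(1-p^{-s})^{-1}+\frac{p-2}{p^{1+w}}+\sum_{j\ge 2}(1-\tfrac1p)^2 p^{-wj}$ and multiplying through by $(1-p^{-w})(1-p^{-s})$, which is the same computation you perform after first pulling out $\zeta(s)$. Your appeal to the closed form $\sum_{a\ge 0}\phi^*(p^a)p^{-a(1+w)}=(1-p^{-1-w})^2/(1-p^{-w})$ is a tidy way to organize the ``small calculation'' the paper leaves to the reader, and the final polynomial expansion checks out.
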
 
\begin{proof}  Both sides of the claimed identity can be expressed as products over primes, and so it is enough to check that the Euler factors match up.  For a prime $p | hk$ the Euler factor on both sides is simply $(1-p^{-s})^{-1}$.   If $p\nmid hk$, the Euler factor on the left side is 
$$ 
\Big(1-\frac 1{p^{s}}\Big)^{-1} + \frac{(p-2)}{p^{1+w}} + \sum_{j=2}^{\infty} \Big(1-\frac 1p\Big)^2 p^{-wj} =  
\Big(1-\frac{1}{p^s}\Big)^{-1} + \frac{(p-2)}{p^{1+w}}  + \Big(1-\frac 1p\Big)^{2} \Big(1-\frac 1{p^w}\Big)^{-1} \frac{1}{p^{2w}}. 
$$ 
Multiply this by $(1-p^{-w})(1-p^{-s})$, and then a small calculation shows that the result matches the Euler factor appearing in 
\eqref{2.7}.   
\end{proof}  

Now we return to our analysis of ${\mathcal D}_{\alpha,\beta}(h,k)$, recalling that its main term in \eqref{4.4} equals 
\begin{equation}
\label{4.7} 
\frac{Q(Q/\pi)^{\frac{\alpha+\beta}{2}}}{2H^{\frac 12+\beta} K^{\frac 12+\alpha}} \frac{1}{2\pi i} \int_{(\epsilon)} \frac{\widetilde{V}_{\alpha,\beta}(s) }{(\pi HK)^s }
\sum_{(q,hk)=1} W_{\alpha,\beta}\Big(\frac qQ\Big) \frac{\phi^*(q)}{q} q^s \zeta_q(1+2s+\alpha+\beta) \frac{ds}{s}. 
\end{equation} 

By Mellin inversion, we may write 
\begin{equation} 
\label{4.8} 
W_{\alpha,\beta}(x) = \frac{1}{2\pi i} \int_{(c)} {\widetilde W}_{\alpha,\beta}(w) x^{-w} dw, \qquad \text{where } \qquad 
{\widetilde W}_{\alpha,\beta}(w) = \int_0^{\infty} x^{w} W_{\alpha,\beta}(x) \frac{dx}{x}. 
\end{equation}   
Therefore the sum over $q$ in \eqref{4.7} becomes (for suitably large $c$) 
$$ 
\frac{1}{2\pi i} \int_{(c)} {\widetilde W}_{\alpha,\beta}(w) Q^w \sum_{(q,hk)=1} \frac{\phi^*(q)}{q}q^{-w+s}  \zeta_q(1+2s+\alpha+\beta) dw, 
$$ 
which is, by Lemma \ref{lem4.1}, 
\begin{align*} 
\frac{1}{2\pi i} \int_{(c)} & {\widetilde W}_{\alpha,\beta}(w) Q^{w}  \zeta(w-s) \zeta(1+2s+\alpha+\beta)  \Phi(hk,w-s) 
{\mathcal P}(hk;w-s, 1+ 2s+\alpha+\beta) dw. 
\end{align*}
We move the line of integration to Re$(w) =\epsilon$, passing a pole at $w=1+s$.   Thus the integral above 
equals
$$ 
{\widetilde W}_{\alpha,\beta}(1+s) Q^{1+s}  \zeta(1+2s+\alpha+\beta) \Phi(hk,1) 
{\mathcal P}(hk;1,1+2s+\alpha+\beta) + O(Q^{\epsilon}). 
$$
 
 Using the above analysis in \eqref{4.7} and \eqref{4.4}, we conclude that 
 \begin{align} 
 \label{4.9} 
 {\mathcal D}_{\alpha,\beta}(h,k) &=O\Big( \frac{Q^2 (\log Q)^2}{C\sqrt{HK}}+ \frac{Q^{1+\epsilon}}{\sqrt{HK}}\Big) + 
   \Big( \frac{Q}{\pi}\Big)^{\frac{\alpha+\beta}{2}} \frac{Q^2 \Phi(hk,1)}{2H^{\frac 12+\beta}K^{\frac 12+\alpha}} \frac{1}{2\pi i} \nonumber\\
&\times \int_{(\epsilon)} {\widetilde V}_{\alpha,\beta}(s) {\widetilde W}_{\alpha,\beta}(1+s) \Big(\frac{Q}{\pi HK}\Big)^s 
\zeta(1+2s+\alpha+\beta) {\mathcal P}(hk;1,1+2s+\alpha+\beta) \frac{ds}{s}. 
 \end{align} 
We shall return to this expression later, combining it with other main terms that will arise.  
 

\section{The terms $\mathcal{L}_{\alpha,\beta}(h,k)$}

\noindent To treat $\mathcal{L}_{\alpha,\beta}$, we replace the condition $d\mid (mh\pm nk)$ by a
sum over all even characters modulo $d$. Thus
\begin{align} 
\label{5.1} 
\mathcal{L}_{\alpha,\beta}(h,k)&= Q\Big(\frac{Q}{\pi}\Big)^{\frac{\alpha+\beta}{2}} \sum_{m, n} \sum_{\substack{ {c, d} \\ c> C\\ (cd,hkmn)=1}} \sum_{\substack{ {\psi \bmod d} \\ {\psi \text{ even }}}} 
\frac{\mu(c)}{cd}   \frac{\psi(mh) \overline{\psi}(nk)}{m^{\frac 12+\alpha} n^{\frac 12+\beta}} V_{\alpha, \beta}\Big( 
\frac{\pi mn}{cd} \Big) W_{\alpha,\beta} \Big( \frac{cd}{Q}\Big) \nonumber \\ 
&= {\mathcal L}_{\alpha,\beta}^0(h,k)  + {\mathcal L}_{\alpha,\beta}^{r}(h,k),  
\end{align} 
where $\mathcal{L}_{\alpha,\beta}^0(h,k)$ denotes the contribution of the principal character $\psi=\psi_0$, and $\mathcal{L}_{\alpha,\beta}^r(h,k)$
denotes the contribution from the remaining non-principal characters $\psi$.    It may appear that we are going backwards in this step by introducing characters 
yet again, but the advantage is that the conductor of these characters (essentially $d$) is now smaller (at most $Q/C$) which allows us to use 
the large sieve inequality efficiently.  

We first simplify a little the main term contribution of ${\mathcal L}_{\alpha,\beta}^0(h,k)$, and then 
estimate the contribution of the remainder terms ${\mathcal L}_{\alpha,\beta}^r(h,k)$.  From its definition, we have  
\begin{align} 
\label{5.2} 
{\mathcal L}_{\alpha,\beta}^0(h, k )&= Q\Big(\frac{Q}{\pi}\Big)^{\frac {\alpha+\beta}{2}} \sum_{m,n} \sum_{\substack{ {c,d} \\ {c>C} \\ {(cd,hkmn)=1}} } \frac{\mu(c) }{cd}  
\frac{1}{m^{\frac 12+\alpha} n^{\frac 12+\beta}} V_{\alpha,\beta}\Big( \frac{\pi mn}{cd} \Big) W_{\alpha,\beta}\Big(\frac{cd}{Q}\Big).  
\end{align}
Group terms according to $cd=q$ and since $q$ is of size $Q$ and so in particular $q>1$, 
note that 
$$
\sum_{\substack{ cd = q \\  c>C}} \mu(c) = -\sum_{\substack{ cd=q \\ c\le C}}  \mu(c).
$$  
Thus we may rewrite the sum over $c$ and $d$ in \eqref{5.2} as 
$$ 
-\sum_{\substack{c\le C \\ (c,hkmn)=1}} \sum_{(d,hkmn)=1} \frac{\mu(c)}{cd}  V_{\alpha,\beta} \Big( \frac{\pi mn}{cd}\Big) W_{\alpha,\beta} \Big(\frac{cd}{Q}\Big).
$$ 
Since $\sum_{d\le x,  (d,hkmn)=1} 1 = x\phi(hkmn)/hkmn + O((hkmn)^{\epsilon})$, using partial summation  the above 
equals  
$$ 
- \sum_{\substack{c\le C \\ (c,hkmn)=1}}\frac{ \mu(c) }{c} \frac{\phi(hkmn)}{hkmn} \int_0^{\infty}  V_{\alpha,\beta} \Big( \frac{\pi mn}{cx}\Big) 
W_{\alpha,\beta}\Big(\frac{cx}{Q}\Big) \frac{dx}{x} + O\Big( \frac{C}{Q}(hkmn)^{\epsilon} e^{-\tau mn/Q}\Big), 
$$ 
for some positive constant $\tau$.  
Inserting this in \eqref{5.2}, we conclude that 
   \begin{align} 
  \label{5.3} 
  {\mathcal L}_{\alpha,\beta}^0 (h,k) &= O( Q^{\frac 12+\epsilon} C) \nonumber \\
  & - Q \Big(\frac{Q}{\pi} \Big)^{\frac{\alpha+\beta}{2} } \sum_{m, n} \frac{1}{m^{\frac 12+\alpha}} \frac{1}{n^{\frac 12+\beta}} \frac{\phi(mnhk)}{mnhk} 
  \sum_{\substack{ (c,hkmn)=1 \\ c\le C} } \frac{\mu(c)}{c} 
  \int_0^{\infty}   V_{\alpha,\beta} \Big(\frac{\pi mn}{Qx} \Big) W_{\alpha,\beta}(x) \frac{dx}{x}.
   \end{align}
   We stop our treatment of ${\mathcal L}_{\alpha,\beta}^0(h,k)$ here; it will turn out that this expression 
   cancels another main term arising from our treatment of ${\mathcal U}_{\alpha,\beta}(h,k)$.  
   
 The next lemma gives a satisfactory bound for $\mathcal{L}_{\alpha,\beta}^r(h,k)$, on average 
  over $h$ and $k$.  
  
  \begin{lemma} \label{lem5.1}  Suppose $\lambda_h \ll h^{\epsilon}$ for $1\le h\le Q^{\vartheta}$.  Then 
  $$ 
  \sum_{h, k \le Q^{\vartheta}} \frac{\lambda_h \overline{\lambda_k}}{\sqrt{hk}} {\mathcal L}_{\alpha,\beta}^r (h,k) \ll \frac{Q^{2+\epsilon}}{C} + Q^{1+\vartheta +\epsilon}. 
  $$ 
   \end{lemma}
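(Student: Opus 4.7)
The plan is to apply Mellin inversion to decouple the variables $m$ and $n$, then use Cauchy--Schwarz together with the large sieve inequality for Dirichlet characters modulo $d$.

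First, using \eqref{3.2} write
$$V_{\alpha,\beta}\bigl(\pi mn/(cd)\bigr) = \frac{1}{2\pi i}\int_{(\epsilon)} \widetilde V_{\alpha,\beta}(s)\,\bigl(cd/(\pi mn)\bigr)^s\,\frac{ds}{s},$$
and interchange orders of summation. For each even non-principal $\psi\bmod d$, the $m$- and $n$-sums now separate and yield
$$\bigl(cd/\pi\bigr)^s L(\tfrac12+\alpha+s,\psi)\,L(\tfrac12+\beta+s,\overline{\psi}),$$
up to an Euler factor at primes dividing $hkc$ of absolute value $\ll (hkc)^\epsilon$. The rapid decay of $\widetilde V_{\alpha,\beta}$ on vertical lines confines $\mathrm{Im}(s)$ to a bounded range, so this integration will cost only a constant.

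Next, set $M(\psi) = \sum_{(h,cd)=1,\,h\le Q^\vartheta}\lambda_h\psi(h)/\sqrt h$. Multiplying by $\lambda_h\overline{\lambda_k}/\sqrt{hk}$ and summing over $h,k\le Q^\vartheta$ introduces a factor $|M(\psi)|^2$. Applying Cauchy--Schwarz in $\psi$, matters reduce to estimating
$$S(d,s):=\sum_{\psi\bmod d}\bigl|L(\tfrac12+\alpha+s,\psi)\,M(\psi)\bigr|^2.$$
On $\mathrm{Re}(s)=\epsilon$, the approximate functional equation expresses $L(\tfrac12+\alpha+s,\psi)$ as a Dirichlet polynomial in $\psi$ of length $\ll \sqrt{d}(|t|+1)^{1+\epsilon}$; multiplied by $M(\psi)$ this gives a Dirichlet polynomial of length $\ll \sqrt{d}\,Q^\vartheta$ with $\ell^2$ coefficient mass $\ll Q^\epsilon$. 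Decomposing characters $\psi\bmod d$ into their primitive inducers of conductor $d_1\mid d$ and applying the classical large sieve for primitive characters yields
$$\sum_{d\le D} S(d,s)\ll \bigl(D^2+\sqrt D\,Q^\vartheta\bigr)\,Q^\epsilon.$$

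Finally, dyadic summation over $d$ gives $\sum_{d\le Q/c}S(d,s)/d \ll (Q/c+Q^\vartheta)\,Q^\epsilon$. Reinstating the outer prefactor $Q$ and the weight $1/(cd)$, and summing over $c>C$ (noting $\sum_{c>C}1/c^2\ll 1/C$ and $\sum_{C<c\le Q}1/c\ll\log Q$), we obtain
$$\Big|\sum_{h,k\le Q^\vartheta}\frac{\lambda_h\overline{\lambda_k}}{\sqrt{hk}}\,\mathcal{L}^r_{\alpha,\beta}(h,k)\Big| \ll Q\sum_{c>C}\frac{1}{c}\bigl(Q/c+Q^\vartheta\bigr)\,Q^\epsilon \ll \frac{Q^{2+\epsilon}}{C}+Q^{1+\vartheta+\epsilon},$$
as claimed. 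The main technical obstacle is the careful bookkeeping of the coprimality conditions $(cd,hkmn)=1$ (which couple the outer variables $c,d,h,k$ to the inner variables $m,n$) and the reduction from general characters $\psi\bmod d$ to primitive inducers required to apply the standard large sieve; both are routine and cost only $Q^\epsilon$ factors that are absorbed into the final bound.
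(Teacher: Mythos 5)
Your proposal follows the paper's framework in its opening moves (Mellin transform to separate $m$ and $n$, reduction to $L$-functions with non-principal characters, shift to $\mathrm{Re}(s)=\epsilon$, reduction to primitive inducers) but then diverges in how the large sieve is deployed. The paper bounds $|L_\alpha\overline{L_\beta}|\,|M|^2$ by a second Cauchy--Schwarz step into fourth moments $\sum_\psi|L|^4$ and $\sum_\psi|M|^4$, invoking the large sieve for each separately; you instead use the approximate functional equation to replace $L$ by a Dirichlet polynomial of length $\ll\sqrt{d}(1+|t|)$, multiply by $M(\psi)$ to obtain a single Dirichlet polynomial of length $\ll\sqrt{d}\,Q^\vartheta$ with $\ell^2$ coefficient mass $\ll Q^\epsilon$, and apply the large sieve once to the product. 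This is a legitimate and slightly slicker alternative; both lead to $\sum_{d\le Q/c}S(d,s)/d\ll(Q/c+Q^\vartheta)Q^\epsilon$ and hence to the stated bound.

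Two technical points deserve more care, though neither upsets the conclusion. First, the approximate functional equation expresses $L(\tfrac12+\alpha+s,\psi)$ as a sum of \emph{two} pieces, one a Dirichlet polynomial in $\psi$ and the other (the dual sum) a Dirichlet polynomial in $\overline{\psi}$ weighted by a root number and a gamma-factor ratio; the product of the dual piece with $M(\psi)$ is therefore not literally a Dirichlet polynomial in $\psi$. The fix is easy — since $\bigl|\sum_n\overline{\psi}(n)b_n\bigr|=\bigl|\sum_n\psi(n)\overline{b_n}\bigr|$, one replaces the dual piece by its conjugate-coefficient version, which is a genuine Dirichlet polynomial in $\psi$, and the gamma factors contribute only $\ll Q^\epsilon$ on $\mathrm{Re}(s)=\epsilon$ with $|\mathrm{Im}(s)|$ bounded — but the claim as written that ``the approximate functional equation expresses $L$ as a Dirichlet polynomial in $\psi$'' is not accurate and should be stated carefully. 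Second, your intermediate assertion $\sum_{d\le D}S(d,s)\ll(D^2+\sqrt{D}\,Q^\vartheta)Q^\epsilon$ neglects the multiplicity with which a primitive character mod $d_1$ is counted among the $d\le D$ with $d_1\mid d$ (a factor $\approx D/d_1$); the correct unweighted bound is closer to $D^2+DQ^\vartheta$. Because the quantity you actually need carries the weight $1/d$, which supplies exactly the compensating $1/d_1$, the weighted sum $\sum_{d\le D}S(d,s)/d\ll(D+Q^\vartheta)Q^\epsilon$ is correct as stated, so the slip is harmless, but the unweighted inequality should not be quoted in that form.
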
  
  \begin{proof}  Using the Mellin transform \eqref{3.2} in the definition of ${\mathcal L}_{\alpha,\beta}^r(h,k)$, we obtain 
 \begin{align*}
  {\mathcal L}_{\alpha,\beta}^r(h,k)  = Q\Big(\frac{Q}{\pi}\Big)^{\frac{\alpha+\beta}{2}}  \sum_{\substack{ c,d \\ c>C \\ (cd,hk)=1}}& \sum_{\substack{\psi \bmod d \\ \psi \text{ even } 
  \\ \psi \neq \psi_0}} \frac{\mu(c) }{cd}   W_{\alpha,\beta}\Big( \frac{cd}{Q}\Big) \psi(h) \overline{\psi(k)} \\
&\times  \frac{1}{2\pi i} \int_{(\frac 12+\epsilon)} {\widetilde V}_{\alpha,\beta}(s) \Big(\frac{cd}{\pi}\Big)^s \sum_{\substack {m, n \\ (cd, mn)=1} }
\frac{\psi(m)\overline{\psi(n)}}{m^{\frac 12+\alpha+s} n^{\frac 12+\beta+s}} \frac{ds}{s}. 
\end{align*} 

The sum over $m$ and $n$ may be written as 
\begin{equation} 
\label{5.4} 
L(\tfrac 12+\alpha+s,\psi) L(\tfrac 12+\beta+ s,\overline{\psi}) \prod_{p|c} \Big(1-\frac{\psi(p)}{p^{\frac 12+\alpha+s}}  \Big) 
\Big( 1- \frac{\overline{\psi(p)}}{p^{\frac 12+\beta +s}}\Big),  
\end{equation} 
 and since $\psi \neq \psi_0$, we may move the line of integration to Re$(s) =\epsilon$ without 
 encountering any poles.  If $s=\epsilon+it$ then ${\widetilde V}_{\alpha,\beta}(s)/s$ is $\ll e^{-\tau |t|}$ for some positive 
 constant $\tau$, and further the product over $p|c$ in \eqref{5.4} may be bounded by $\ll c^{\epsilon}$.   Therefore, we 
 find (with $s=\epsilon+it$) 
 \begin{align}
 \label{5.5} 
 \sum_{h, k\le Q^{\vartheta}} \frac{\lambda_h \overline{\lambda_k}}{\sqrt{hk}} {\mathcal L}_{\alpha,\beta}^{r}(h,k) 
 &\ll Q^{\epsilon}  \sum_{\substack{c, d \\ c> C} } W\Big( \frac{cd}{Q}\Big) \sum_{\substack{ \psi \bmod d \\ \psi \text{ even} \\ \psi \neq \psi_0}} 
 \Big| \sum_{\substack{ h\le Q^{\vartheta} \\ (h,c)=1} } \frac{\lambda_h \psi(h)}{\sqrt{h}}\Big|^2 \nonumber \\
&\hskip 1 in \times  \int_{-\infty}^{\infty} e^{-\tau |t|} |L(\tfrac 12+\alpha+s,\psi) L(\tfrac 12+\beta+s,\overline{\psi})| dt. 
 \end{align}
 
 Given $c>C$, consider the sum over $d$ above (so $d$ is of size $Q/c$).   We regroup the characters $\psi$ above 
 based on primitive characters; thus suppose $d=ru$ and that $\psi$ is induced by a primitive character $\bmod u$.  Therefore 
 the sum over $d$ above may be bounded by 
\begin{align}
\label{5.6} 
Q^{\epsilon}  \sum_{r \ll Q/c} \sum_{u \ll Q/(cr)}& \sum_{\substack{ \psi \bmod u \\ \psi \text{ even, primitive} }} \Big| \sum_{\substack{h \le Q^{\vartheta} \\ (h,cr) =1} } \frac{\lambda_h \psi(h)}{\sqrt{h} } \Big|^2 
 \nonumber \\
&\times \int_{-\infty}^{\infty} e^{-\tau |t|} \Big( |L(\tfrac 12+\alpha+s,\psi)|^2 +|L(\tfrac 12+\beta+s,\psi)|^2\Big) dt. 
\end{align}
Now applications of the large sieve show that 
$$ 
\sum_{\substack{\psi \bmod u \\ \psi \text{ primitive } } } (|L(\tfrac 12 +\alpha+s, \psi)|^4 + |L(\tfrac 12+ \beta+s,\psi)|^4 ) \ll u^{1+\epsilon} |s|^A,  
$$ 
for some constant $A$, and that 
$$ 
\sum_{u\ll Q/(cr)} \sum_{\substack{\psi \bmod u \\ \psi \text{ primitive } } } \Big| \sum_{\substack{ h\le Q^{\vartheta} \\ (h,cr) =1}} \frac{\lambda_h \psi(h)}{\sqrt{h}} \Big|^4 
\ll Q^{\epsilon} \Big( \frac{Q^2}{c^2 r^2} + Q^{2\vartheta} \Big). 
$$ 
Using these estimates together with Cauchy--Schwarz in \eqref{5.6}, we see that the quantity in \eqref{5.6} is 
$$ 
\ll Q^{\epsilon} \sum_{r \ll Q/c} \frac{Q}{cr} \Big( \frac{Q}{cr} + Q^{\vartheta} \Big) \ll \frac{Q^{2+\epsilon}}{c^2} + \frac{Q^{1+\vartheta + \epsilon}}{c}. 
$$ 
Inserting this estimate into \eqref{5.5} and summing over $c$ with $C\le  c \ll Q$, the lemma follows. 
   \end{proof}

\section{The terms ${\mathcal U}_{\alpha,\beta}(h,k)$: Switching to the complementary modulus}

\noindent We begin our treatment of the terms ${\mathcal U}_{\alpha,\beta}(h,k)$, which forms the hardest part of our analysis.  Recall, 
from \eqref{3.7}, the definition 
\begin{equation} 
\label{6.1} 
{\mathcal U}_{\alpha,\beta}(h,k) = \frac Q2 \Big( \frac{Q}{\pi}\Big)^{\frac{\alpha+\beta}{2}} \sum_{\substack{ m, n \\ mh\neq nk}} \sum_{ \substack{ (cd,hkmn)=1 \\ {d\mid mh\pm kn} \\ c\le C  }} \frac{ \mu(c)\phi(d)}{cd} 
 \frac{1}{m^{\frac 12+\alpha}n^{\frac 12+\beta}}V_{\alpha,\beta}\Big(\frac{\pi mn}{cd}\Big)W_{\alpha,\beta}\Big(\frac{cd}{Q}\Big).
\end{equation} 

Our aim is to replace occurrences of $d$ (which is a divisor of $|mh \pm nk|$) with its 
complementary divisor (essentially $|mh \pm nk|/d$) which will be of smaller size.  The presence of the 
factor $\phi(d)$ and the coprimality condition $(d,hk mn)=1$, makes this a little complicated, entailing extra 
applications of M{\" o}bius inversion.  Treating $m$, $n$ and $c$ as given (with $mh \neq kn$), consider the 
sum over $d$ in \eqref{6.1}.   Write $(mh,nk)=g$, and then the conditions $d|(mh \pm nk)$ and $(d, hk mn) =1$ 
are equivalent to $d| (mh \pm nk)/g$ and $(d,g)=1$.  Therefore, we are interested in 
$$ 
\sum_{\substack{ (d,g)=1 \\ d| (|mh\pm nk|/g)}} \frac{ \phi(d) }{d}  W_{\alpha,\beta}\Big(\frac{cd}{Q}\Big)V_{\alpha,\beta}\Big(\frac{\pi mn}{cd}\Big). 
$$ 
Writing $\phi(d)/d = \sum_{ef= d} \mu(e)/e$, the above becomes 
$$ 
\sum_{(e,g)=1} \frac{ \mu(e)}{e} \sum_{\substack{ (f,g)=1 \\ ef | (|mh\pm nk|/g)}}   W_{\alpha,\beta}\Big( \frac{cef}{Q}\Big) 
V_{\alpha,\beta}\Big(\frac{\pi mn}{cef}\Big). 
$$ 
Lastly, we express the condition $(f,g)=1$ using M{\" o}bius inversion $\sum_{a|g, a|f} \mu(a)$; thus with $ab=f$, the 
sum above is 
\begin{equation} 
\label{6.2} 
\sum_{(e,g)=1} \sum_{a|g} \sum_{\substack{ b \\ abe | (|mh \pm nk|/g)} }  \mu(a) \frac{\mu(e)}{e} W_{\alpha,\beta}\Big( \frac{ceab}{Q}\Big) 
V_{\alpha,\beta}\Big(\frac{\pi mn}{ceab}\Big). 
\end{equation} 

We now introduce the complementary modulus $\ell$ via the relation $|mh \pm nk| = g eab \ell$, thereby converting the 
sum over $b$ into one over $\ell$.   
Thus the quantity in \eqref{6.2} may be recast as 
$$ 
  \sum_{\substack{ (e,g)=1 \\ a|g}} \sum_{\substack{ \ell \\ ae \ell | (|mh\pm nk|/g) } } \mu(a) \frac{\mu(e)}{e}  W_{\alpha,\beta}\Big( \frac{c|mh \pm nk|}{Qg\ell} \Big) 
V_{\alpha,\beta}\Big( \frac{\pi mn g\ell}{c|mh\pm nk|}\Big). 
$$ 
 Inserting this in \eqref{6.1}, we conclude that 
 \begin{align} 
 \label{6.4} 
 {\mathcal U}_{\alpha, \beta}(h,k) & = \frac{Q}{2} \Big(\frac{Q}{\pi}\Big)^{\frac{\alpha+\beta}{2}} \sum_{\substack{ (c,hk)=1 \\ c\le C}} \frac{\mu(c)}{c} 
 \sum_{\substack{ m, n\\ (c,mn)=1 \\ mh\neq nk}} \sum_{ \substack{a | g \\ (e,g)= 1} } \mu (a) \frac{\mu(e)}{e} \frac{1}{m^{\frac 12+\alpha}} \frac{1}{n^{\frac 12+\beta}} \nonumber \\ 
 &\hskip 1 in \sum_{\substack{ \ell \\ ae\ell | (|mh \pm nk|/g)} }W_{\alpha, \beta} \Big( \frac{c|mh\pm nk|}{Qg\ell}\Big) V_{\alpha,\beta} \Big( \frac{\pi mn g\ell}{c|mh \pm nk|}\Big).
 \end{align} 
 
 Since $(mh, nk)=g$, in \eqref{6.4} note that $ae\ell$ must be coprime to $mh/g$ and $nk/g$.  Therefore the congruence $(mh \pm nk)/g  \equiv 0 \bmod{ae\ell}$ may 
 be detected using the characters $\bmod \ {ae\ell}$.  We isolate the contribution of the principal character $\bmod \ {ae\ell}$ as ${\mathcal U}_{\alpha,\beta}^0 (h,k)$ 
 and denote by ${\mathcal U}_{\alpha,\beta}^{r}(h,k)$ the contribution of the remaining non-principal characters.   Thus 
 \begin{align} 
 \label{6.5} 
 {\mathcal U}_{\alpha, \beta}^0(h,k) & = \frac{Q}{2} \Big(\frac{Q}{\pi}\Big)^{\frac{\alpha+\beta}{2}} \sum_{\substack{ (c,hk)=1 \\ c\le C}} \frac{\mu(c)}{c} 
 \sum_{\substack{ m, n\\ (c,mn)=1\\ mh\neq nk}} \sum_{ \substack{a | g \\ (e,g)= 1} } \mu (a) \frac{\mu(e)}{e}\frac{1}{m^{\frac 12+\alpha}} \frac{1}{n^{\frac 12+\beta}}  \nonumber \\ 
 &\hskip 1 in \sum_{\substack{ \ell \\ (ae \ell, mnhk/g^2)=1} } \frac{1}{\phi(ae\ell)} W_{\alpha, \beta} \Big( \frac{c|mh\pm nk|}{Qg\ell}\Big) V_{\alpha,\beta} \Big( \frac{\pi mn g\ell}{c|mh \pm nk|}\Big), 
 \end{align} 
 and 
 \begin{align} 
 \label{6.6} 
 {\mathcal U}_{\alpha, \beta}^r(h,k) & = \frac{Q}{2} \Big(\frac{Q}{\pi}\Big)^{\frac{\alpha+\beta}{2}} \sum_{\substack{ (c,hk)=1 \\ c\le C}} \frac{\mu(c)}{c} 
 \sum_{\substack{ m, n\\ (c,mn)=1 \\ mh \neq nk}} \sum_{ \substack{a | g \\ (e,g)= 1} } \mu (a) \frac{\mu(e)}{e}\frac{1}{m^{\frac 12+\alpha}} \frac{1}{n^{\frac 12+\beta}}  \nonumber \\ 
 &\hskip 0.4 in \sum_{\substack{ \ell } } \frac{1}{\phi(ae\ell)}  \sum_{\substack{ \psi \bmod {ae\ell} \\ \psi\neq \psi_0}} \psi(mh/g) \overline{\psi}(\mp nk/g) W_{\alpha, \beta} \Big( \frac{c|mh\pm nk|}{Qg\ell}\Big) V_{\alpha,\beta} \Big( \frac{\pi mn g\ell}{c|mh \pm nk|}\Big).
 \end{align} 
 The contribution of the non-principal characters will be estimated in the next section, and then we continue the analysis of the main terms arising from ${\mathcal U}_{\alpha,\beta}^0(h,k)$ in Section 7.   

\section{The error terms $\mathcal{U}_{\alpha,\beta}^r(h,k)$}

\noindent Our goal in this section is to establish the estimate 
$$ 
{\mathcal U}_{\alpha,\beta}^E = \sum_{h, k \le Q^{\vartheta}} \frac{\lambda_h \overline{\lambda_k}}{\sqrt{hk}} {\mathcal U}_{\alpha,\beta}^r (h,k) \ll Q^{1+\vartheta +\epsilon}C. 
$$ 
We begin by estimating the contribution of terms arising from \eqref{6.6} with $a\ell \ge Q^{\vartheta +\epsilon} C$ or $ae\ell \ge Q^{12}$.  
Consider first the contribution of terms where $mh/g \equiv \pm nk/g 
\bmod{ae\ell}$.    Since $mh \neq nk$, these terms satisfy $\max(mh/g, nk/g) \gg ae\ell$ and, since $W_{\alpha,\beta}$ is supported on $[1,2]$, we also have that 
$c|mh \pm nk| \asymp Qg \ell$, so that (since $a$ divides $g$) $\max(m, n) \gg Qa\ell/(CQ^{\vartheta})$.   It follows that  for 
some constant $\tau >\tau_1 > 0$ 
$$ 
\Big| W_{\alpha,\beta}\Big( \frac{c|mh \pm nk|}{Qg\ell} \Big) V_{\alpha,\beta}\Big(\frac{\pi mng\ell}{c|mh\pm nk|} \Big) \Big| \ll \exp\Big(- \tau \frac{mn}{Q} \Big) \ll 
\frac{Q}{mn} \exp\Big( - \max\Big( \frac{\tau_1 ae\ell}{Q^2}, \frac{\tau_1 a\ell}{CQ^{\vartheta}}\Big) \Big). 
$$
This is exponentially small when either $a\ell \ge Q^{\vartheta+\epsilon} C$ or $ae\ell \ge Q^{12}$, and 
therefore the contribution of these terms is 
$$ 
Q^{1+\epsilon} \sum_{h, k\le Q^{\vartheta}} \frac{1}{\sqrt{hk}} \sum_{c\le C} \frac{1}{c} \sum_{\substack{ a, e, \ell \\ ae\ell \ge Q^{12}}} \frac{1}{e} 
\sum_{\substack{m, n\\ mh \neq nk \\  a|g \\ ae\ell | (mh/g \pm nk/g)}} \frac{Q}{(mn)^{\frac{5}{4}}} Q^{-10} \ll Q^{-1}.  
$$
Now consider the contribution of terms with $mh/g \not\equiv \pm nk/g \bmod{ae\ell}$.  These terms contribute 
$$ 
\ll Q^{1+\epsilon}  \sum_{h, k\le Q^{\vartheta}} \frac{1}{\sqrt{hk}} \sum_{c\le C} \frac{1}{c} \sum_{\substack{ a, e, \ell \\ ae\ell \ge Q^{12} \\ \text{or } a\ell \ge CQ^{\vartheta+
\epsilon} }} \frac{1}{e\phi(ae\ell)}  \sum_{m, n} \frac{1}{(mn)^{\frac 12-\epsilon}} \exp\Big( -\frac{\tau mn}{2Q} -\frac{\tau a\ell}{2CQ^{\vartheta}}\Big) \ll Q^{-1}. 
$$

Having discarded the contribution from terms with $ae\ell \ge Q^{12}$ or $a\ell \ge CQ^{\vartheta +\epsilon}$, we conclude that 
\begin{align} 
\label{7.01} 
{\mathcal U}_{\alpha,\beta}^{E} &\ll Q^{1+\epsilon} \sum_{c\le C} \frac 1c \sum_{\substack{a, e, \ell \\ ae\ell \le Q^{12}  \\ a\ell \le CQ^{\vartheta +\epsilon}}} \frac{1}{ae^2 \ell} 
\sum_{\substack{ \psi \bmod {ae\ell} \\ \psi\neq\psi_0}} \Big( |{\mathcal U}^+(c,a,e,\ell; \psi) | + |{\mathcal U}_{\alpha,\beta}^{-}(c,a,e,\ell;\psi)| \Big), 
\end{align} 
where 
\begin{align} 
\label{7.02} 
{\mathcal U}_{\alpha,\beta}^{-}& (c,a,e,\ell;\psi) \nonumber\\ 
=& \sum_{\substack{ h, k\\ (hk,c)=1}} \frac{\lambda_h \overline{\lambda_k}}{\sqrt{hk}} \sum_{\substack{m, n \\ (mn,c)=1 \\ a|g \\ (g,e)=1}} 
\frac{\psi(mh/g)\overline{\psi}(nk/g)}{m^{\frac 12+\alpha} n^{\frac 12+\beta}} W_{\alpha,\beta}\Big( \frac{c|mh - nk|}{Qg\ell}\Big) V_{\alpha,\beta}\Big(\frac{\pi mn g\ell}{c|mh - nk|} \Big),  
\end{align}
and ${\mathcal U}_{\alpha,\beta}^{+}$ is defined similarly.  
 To estimate the sums in \eqref{7.01}, we divide the sums over $a$, $e$, and $\ell$ into dyadic blocks $A\le a <2A$, $E\le e <2E$, and $L \le \ell < 2L$.  We may assume that 
 $AL \ll CQ^{\vartheta+\epsilon}$ and that $AEL \ll Q^{12}$.   After restricting these 
 variables to dyadic blocks, we now focus on the contribution of the sum in \eqref{7.02}.   The ${\mathcal U}_{\alpha,\beta}^{+}$ term can be handled similarly, and in fact is simpler to treat (because while $mh-nk$ can become unusually small in size, $mh+nk$ cannot).  The variable $g$ above arises as the gcd of $mh$ and $nk$, and our first task is to 
 separate this variable from $m$, $h$, $n$, and $k$ so as to make the sums over those variables independent of each other.

 Write $g_1= (h,k)$, $g_2=(m,n)$, $g_3=(m/g_2,k/g_1)$, and $g_4 = (n/g_2, h/g_1)$.  Then $g=(mh,nk) = g_1g_2g_3g_4$.  Further, write 
 $h=g_1 g_4 H$, $k=g_1 g_3 K$, $m=g_2g_3M$ and $n= g_2g_4N$.  There are now a number of coprimality conditions $(g_3, g_4)=1$, $(H,g_3)=1$, $(K,g_4)=1$, 
 $(H,K)=1$, $(M,g_4)=1$, $(N,g_3)=1$, and $(M,N)=1$, and further we must have $a|(g_1g_2g_3g_4)$, $(g_1g_2g_3g_4, ec)=1$, and $(MNHK,c)=1$.  
 Thus we may rewrite ${\mathcal U}^-(c,a,e,\ell;\psi)$ as 
 \begin{equation} 
 \label{7.03} 
 \sum_{\substack{g_1, g_2, g_3, g_4 \\ M, N, H, K }}^{*}  \frac{\lambda_{g_1g_4H} \overline{\lambda_{g_1g_3K}}}{g_1 \sqrt{g_3g_4 HK}} \frac{\psi(MH) \overline{\psi}(NK)}{(g_2g_3M)^{\frac 12+\alpha} (g_2 g_4 N)^{\frac 12+\beta}} W_{\alpha,\beta}\Big( \frac{c|MH- NK|}{Q\ell}\Big) V_{\alpha,\beta}\Big( \frac{\pi g_2^2 g_3 g_4 MN \ell}{c|MH - NK|}\Big). 
 \end{equation} 
 Here the $*$ indicates the various coprimality and divisibility conditions mentioned above.

 We now introduce a smooth function $\Psi(x)$ (defined on $[0,\infty)$) with $\Psi(x) = 1$ for $x\le 1$ and $\Psi(x)=0$ for $x\ge 2$.  Next define, for positive 
 real numbers $x$, $y$, and $u$ 
 $$ 
 {\mathcal V}_{\alpha,\beta}(x,y;u) = W_{\alpha,\beta}(|x-y|) \Psi\Big( \frac{xL}{C^2 Q^{\vartheta}}\Big) \Psi\Big( \frac{yL}{C^2 Q^{\vartheta}}\Big) V_{\alpha,\beta}\Big(\frac{u}{|x-y|}\Big). 
 $$ 
 The support of $W$ guarantees that $1\le |x-y| \le 2$, and therefore ${\mathcal V}_{\alpha,\beta}(x,y;u)$ decays exponentially in $u$.  
 In \eqref{7.03} we replace the weights 
 $$ 
 W_{\alpha,\beta}\Big( \frac{c|MH- NK|}{Q\ell}\Big) V_{\alpha,\beta}\Big( \frac{\pi g_2^2 g_3 g_4 MN \ell}{c|MH - NK|}\Big) \text{   with   } {\mathcal V}_{\alpha,\beta}\Big( 
 \frac{cMH}{Q\ell}, \frac{cNK}{Q\ell} ; \frac{\pi g_2^2g_3g_4MN}{Q}\Big).
 $$ 
 The two expressions above are different only if either $M$ or $N$ is at least $CQ$, and in this case the difference is exponentially small because of the 
 rapid decay of $V_{\alpha,\beta}$; therefore we incur a negligible error term in ${\mathcal U}^E_{\alpha,\beta}$ in making this replacement.  
 To proceed further, we introduce a three variable Mellin transform of ${\mathcal V}_{\alpha,\beta}$, and discuss its analytic properties. 
 
 \begin{lemma} \label{lem6.1}  Let $s_1$, $s_2$ and $z$ denote complex numbers with positive real part.  Define 
 $$ 
 \widetilde{\mathcal V}_{\alpha,\beta}(s_1,s_2;z) = \int_0^{\infty}\int_{0}^{\infty} \int_{0}^{\infty} {\mathcal V}_{\alpha,\beta}(x,y;u) x^{s_1} y^{s_2} u^z \frac{dx}{x} \frac{dy}{y} 
 \frac{du}{u}. 
 $$ 
 Then for any integers $k_1$, $k_2 \ge 1$ we have 
 $$ 
 \widetilde{\mathcal V}_{\alpha,\beta}(s_1,s_2;z)  \ll \frac{ (1+C^2Q^{\vartheta}/L)^{k_1-1+\text{Re}(s_1+s_2)}}{\max (|s_1|, |s_2|)^{k_1} }\frac{1}{|z|^{k_2}}, 
 $$ 
 and moreover the following Mellin inversion formula holds: for positive numbers $c_1$, $c_2$, $d$  
 $$ 
 {\mathcal V}_{\alpha,\beta}(x,y;u) = \frac{1}{(2\pi i)^3} \int_{\substack{ \text{Re}(s_1) = c_1 \\ \text{Re}(s_2) = c_2 \\ \text{Re}(z) =d  }} 
 \widetilde{\mathcal V}_{\alpha,\beta}(s_1, s_2; z) x^{-s_1} y^{-s_2} u^{-z} ds_1 ds_2 dz. 
 $$ 
 \end{lemma}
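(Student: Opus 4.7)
The plan is to obtain the bound on $\widetilde{\mathcal V}_{\alpha,\beta}(s_1,s_2;z)$ by successive integration by parts—$k_1$ times in whichever of $x,y$ carries the larger Mellin dual, and $k_2$ times in $u$—exploiting the smoothness of ${\mathcal V}_{\alpha,\beta}$ and its supports (in $x,y$ via the cutoffs $W(|x-y|)$ and $\Psi$, and in $u$ via the rapid decay of $V_{\alpha,\beta}$). The Mellin inversion identity will then follow from three-variable Mellin inversion applied with the decay provided by the bound.

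First, note that ${\mathcal V}_{\alpha,\beta}$ is $C^\infty$, is supported on $\{|x-y|\in[1,2]\}$ with $x,y\ll C^2 Q^\vartheta/L$, and decays exponentially in $u$ (since, as recorded in Section~2, $V_{\alpha,\beta}(t)$ and all its derivatives decay exponentially as $t\to\infty$). Assume $|s_1|\ge |s_2|$; the other case is symmetric. Integrating by parts $k_1$ times in $x$—the boundary terms vanish by compact support in $x$ and because $\mathrm{Re}(s_1)>0$—replaces ${\mathcal V}_{\alpha,\beta}$ by $\partial_x^{k_1}{\mathcal V}_{\alpha,\beta}$, inserts the factor $x^{k_1}$, and produces the constant
\[
\frac{(-1)^{k_1}}{s_1(s_1+1)\cdots(s_1+k_1-1)} \;\ll\; \frac{1}{|s_1|^{k_1}} \;=\; \frac{1}{\max(|s_1|,|s_2|)^{k_1}}.
\]
Next, integrating by parts $k_2$ times in $u$ exploits that only $V_{\alpha,\beta}(u/|x-y|)$ depends on $u$, so $\partial_u^{k_2}V_{\alpha,\beta}(u/|x-y|)=|x-y|^{-k_2}V_{\alpha,\beta}^{(k_2)}(u/|x-y|)$; after substituting $u=|x-y|v$ and using the rapid decay of $V_{\alpha,\beta}^{(k_2)}$, the full $u$-integral contributes $\ll |z|^{-k_2}$ uniformly.

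For the remaining double integral in $x,y$, I would bound $|\partial_x^{k_1}{\mathcal V}_{\alpha,\beta}|$ by a constant depending only on $k_1$: derivatives of $W(|x-y|)$ are bounded because $W\in C^\infty_c$; derivatives of $\Psi(xL/(C^2 Q^\vartheta))$ contribute only harmless factors $\le 1$; and the chain rule applied to $V_{\alpha,\beta}(u/|x-y|)$ gives a sum of terms of the form $V_{\alpha,\beta}^{(j)}(u/|x-y|)\cdot P_j(u/|x-y|)\cdot |x-y|^{-k_1}$ for polynomials $P_j$, all uniformly bounded because $|x-y|\ge 1$ and $t^m V_{\alpha,\beta}^{(j)}(t)$ is bounded on $[0,\infty)$ for every $j,m\ge 0$. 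Integrating $x^{\mathrm{Re}(s_1)+k_1-1}y^{\mathrm{Re}(s_2)-1}$ over $\{(x,y):|x-y|\le 2,\ x,y\ll C^2 Q^\vartheta/L\}$—for each $x$ the $y$-range has length $O(1)$, and the small region $x\le 4$ is absorbed using $k_1\ge 1$—yields the factor $(1+C^2 Q^\vartheta/L)^{k_1-1+\mathrm{Re}(s_1+s_2)}$, completing the claimed bound.

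The Mellin inversion formula then follows by iterating the one-dimensional Mellin inversion theorem (in $u$, then $y$, then $x$): the bound just proved (applied with, say, $k_1=k_2=2$) gives super-polynomial decay in $\mathrm{Im}(s_1)$, $\mathrm{Im}(s_2)$, and $\mathrm{Im}(z)$, which ensures absolute convergence on each vertical contour with positive real parts. The principal technical obstacle is the careful bookkeeping of the chain-rule expansion of $\partial_x^{k_1}V_{\alpha,\beta}(u/|x-y|)$; an attractive alternative is to substitute the Mellin representation \eqref{3.2} for $V_{\alpha,\beta}$ and interchange orders of integration, which separates the $u$- and $|x-y|$-dependence and reduces the estimate to a cleaner two-variable problem.
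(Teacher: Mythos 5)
Your argument is correct and takes the same route as the paper's (very terse) proof: integrate by parts $k_2$ times in $u$ and $k_1$ times in whichever of $x,y$ carries the larger dual variable, then use that the support is confined to $|x-y|\asymp 1$ and $x,y\ll C^2Q^{\vartheta}/L$; you simply supply the bookkeeping that the paper compresses into a single sentence. The one point worth flagging is that the paper never explicitly records that the derivatives $V_{\alpha,\beta}^{(j)}$ also decay rapidly, but this follows at once from the Mellin representation \eqref{3.2} and is clearly intended.
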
 
\begin{proof}  Despite the appearance of $|x-y|$, since $W_{\alpha,\beta}$ is supported away from $0$, the function ${\mathcal V}_{\alpha, \beta}(x,y;u)$ is 
smooth in all three variables.  The estimate on the Mellin transform follows by integrating by parts $k_2$ times in the $u$ variable, and $k_1$ times 
in either $x$ or $y$ corresponding to whether $s_1$ or $s_2$ has larger magnitude, and keeping in mind that ${\mathcal V}_{\alpha,\beta}(x,y;u)=0$ unless $|x- y| \asymp 1$.
\end{proof} 
 
 We use the Mellin inversion formula above in \eqref{7.03}.   Initially we start with $\text{Re}(s_1)= \text{Re}(s_2) = 1/2+\epsilon$ and Re$(z) = \epsilon$, where 
 the sums over $M$ and $N$ are absolutely convergent.   After relating those sums to $L$-functions attached to non-principal characters, we may move the 
 lines of integration to Re$(s_1) = \text{Re}(s_2)= \epsilon$.   Thus we can bound the quantity in \eqref{7.03} by 
 \begin{align} 
 \label{7.04} 
Q^{\epsilon}  \sum_{g_1,g_2,g_3,g_4}^* \frac{1}{g_1g_2g_3 g_4} &\int_{\substack{\text{Re}(s_1) =\epsilon \\ \text{Re}(s_2) = \epsilon \\ 
\text{Re}(z) = \epsilon} } |{\mathcal V}_{\alpha,\beta}(s_1,s_2;z)| |L(\tfrac 12+ \alpha+s_1+z, \psi) L(\tfrac12 + \beta+s_2 + z, \overline{\psi})| \nonumber \\
&\times \Big|\sum_{H, K}^{*} \frac{\lambda_{g_1 g_4 H} \overline{\lambda}_{g_1 g_3 K}}{H^{\frac 12 +s_1} K^{\frac 12 +s_2}} \psi(H) \overline{\psi}(K) \Big| |ds_1 ds_2 dz |. 
 \end{align} 
 Here the factor $Q^{\epsilon}$ arises from estimating trivially the ratio of the actual sum over $M$ and $N$ (which has 
 coprimality restrictions) and its dominant part (which is the product of $L$-functions given above).   
 Further, in \eqref{7.04}, the $*$ over the first sum over $g_1$, $g_2$, $g_3$, $g_4$ keeps track of the coprimality conditions of these variables together with the requirement that 
 $a|g_1g_2g_3g_4$, and the $*$ over the sum over $H$ and $K$ indicates $(H,K)=1$ as well as $(H, g_3c) =(K,g_4c)=1$.   Now we use M{\" o}bius inversion $\sum_{k|(H,K)} 
 \mu(k) = 1$ if and only if $(H,K)=1$ in order to separate the variables $H$ and $K$ in the sum in \eqref{7.04}.  Writing 
 \begin{equation}
 \label{7.05} 
 A(s,\psi;u,v) = \sum_{(n,v)=1} \frac{\lambda_{un}\psi(n)}{n^s}, \qquad \text{and} \qquad \overline{A}(s,\psi;u,v) =\sum_{(n,v) =1} \frac{\overline{\lambda}_{un}\psi(n)}{n^s}, 
 \end{equation}
 we can bound the sum over $H$ and $K$ in \eqref{7.04} by 
 $$ 
 \ll \sum_{k} \frac{1}{k} |A(\tfrac 12+s_1, \psi;g_1g_4k,g_3c) \overline{A}(\tfrac 12+ s_2, \overline{\psi}; g_1 g_3k, g_4c)|.
 $$ 
 Since $\lambda_n =0$ for $n> Q^{\vartheta}$, the above sum over $k$ is just a finite sum.  
 
 Inputting these estimates into \eqref{7.02} we find that the contribution of $a$, $e$, $\ell$ in dyadic blocks of size $A$, $E$, $L$ to \eqref{7.01} is 
 \begin{align} 
 \label{7.06} 
 \ll \frac{Q^{1+\epsilon}}{A^2 E^2L} \sum_{A \le a <2A} &\max_{\substack{u_1, u_2 \\ v_1, v_2}} 
 \sum_{\substack{E\le e < 2E\\ L\le \ell <2L}} \sum_{\substack{\psi \bmod{ae \ell} \\ \psi \neq \psi_0 } } \int_{\substack{ \text{Re}(s_1) = \epsilon \\ 
 \text{Re}(s_2) = \epsilon \\ \text{Re} (z) =\epsilon } } | {\mathcal V}_{\alpha,\beta}(s_1, s_2;z)|  |L(\tfrac 12+s_1 +\alpha+ z,\psi)|\nonumber \\
 &\times | L(\tfrac 12+s_2+\beta +z,\overline{\psi})| 
 |A(\tfrac 12+ s_1, \psi;u_1,v_1) A(\tfrac 12+ s_2, \psi; u_2, v_2)| |ds_1 ds_2 dz|. 
 \end{align} 
 We arrive at this expression by using the max over $u_1$, $u_2$, $v_1$, $v_2$ in order to free up the dependencies on $g_1$, $g_2$, $g_3$, $g_4$ and $c$.  The 
 extra factor of $A$ above arises from the requirement that $g_1g_2g_3g_4$ must be a multiple of $a$.  
 
 Now we split the integrals over $s_1$, $s_2$ into dyadic blocks.   From Lemma 5 we see that $\widetilde{\mathcal V}_{\alpha,\beta}(s_1,s_2;z)$ decays 
 rapidly in $|z|$.  We suppose that $|s_1|$ is of size $S_1$ and that this is larger than $|s_2|$.   Inputting the bounds of Lemma 5, the contribution from this dyadic block to the integrals in \eqref{7.06} is (for any natural numbers $k_1$ and $k_2$)  
 \begin{align*}
 \ll Q^{\epsilon} \frac{(1+Q^{\vartheta}/L)^{k_1-1}}{S_1^{k_1}} \int_{\text{Re}(z) = \epsilon} \frac{1}{|z|^{k_2}} &\int_{\substack{\text{Re}(s_1)=\text{Re}(s_2)=\epsilon\\ 
 |s_1|, |s_2| \le 2S_1}}  |L(\tfrac 12+s_1 +\alpha+ z,\psi) L(\tfrac 12+s_2+\beta +z,\overline{\psi})| \\
 &\times  |A(\tfrac 12+ s_1, \psi;u_1,v_1) A(\tfrac 12+ s_2, \psi; u_2, v_2)| |ds_1 ds_2 dz|. 
 \end{align*}
We insert this in \eqref{7.06}, and group terms $r=ae$ so that $r$ lies between $AE$ and $4AE$.  We are left with estimating 
\begin{align} 
\label{7.07} 
&\frac{Q^{1+\epsilon}(1+Q^{\vartheta}/L)^{k_1-1}}{A^2 E^2 L S_1^{k_1}} \sum_{AE \le r <4AE} 
\max_{\substack{u_1, u_2 \\ v_1, v_2}} \sum_{\substack{ L \le\ell  < 2L \\ \psi\bmod{r\ell} \\ \psi\neq \psi_0}} \int_{\text{Re}(z) = \epsilon} \frac{1}{|z|^{k_2}} \int_{\substack{\text{Re}(s_1)=\text{Re}(s_2)=\epsilon\\ 
 |s_1|, |s_2| \le 2S_1}} |ds_1 ds_2 dz| \nonumber \\ 
 &|L(\tfrac 12+s_1 +\alpha+ z,\psi) L(\tfrac 12+s_2+\beta +z,\overline{\psi})|  |A(\tfrac 12+ s_1, \psi;u_1,v_1) A(\tfrac 12+ s_2, \psi; u_2, v_2)|. 
 \end{align}

At this stage we appeal to the following consequence of the hybrid large sieve inequality. 

\begin{proposition} \label{prop1} Given a character $\chi$, put 
$$
M(s,\chi)=\sum_{n\le N} a(n) \chi(n)n^{-s}
$$ 
where $a(n) \ll n^{\epsilon}$.  
Let $\sigma \ge 1/2$ be a real number, and $z$ a complex number with Re$(z) >0$.  Let $k$ be a positive integer, and $Q \ge 2$ and $T\ge 2$ be real 
numbers.  Then 
\begin{align*}
\sum_{q\le Q} \sum_{\substack{ \chi \bmod {qk} \\ \chi \neq\chi_{0}}}  \Big(\int_{-T}^T &|L(\sigma+it + z, \chi) M(\sigma+ it,\chi)| dt \Big)^2 \\
& \ll (kQTN(1+|z|))^{\epsilon} 
\Big( kQ^2 T^2 (1+|z|)^{\frac 12} + QNT (1+|z|)^{\frac 12} k^{\frac 12}\Big) . 
\end{align*} 
\end{proposition}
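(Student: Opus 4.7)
The plan is to represent $L(\sigma+it+z,\chi)$ as a Dirichlet polynomial via an approximate functional equation, combine with $M$, and appeal to the hybrid large sieve. Since $\chi\ne\chi_0$, one writes
\[
L(\sigma+it+z,\chi) = \sum_m \frac{\chi(m)\, V(m/X_t)}{m^{\sigma+it+z}} + \gamma_z(\chi,t) \sum_m \frac{\bar\chi(m)\, W(m X_t/(qk))}{m^{1-\sigma-it-z}},
\]
where $V,W$ are smooth cutoffs, $X_t\asymp\sqrt{qk(1+|t+\mathrm{Im}(z)|)}$, and $\gamma_z$ is a gamma-ratio of size $\ll(qk(1+|t+z|))^{\sigma-1/2}$. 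Multiplying by $M(\sigma+it,\chi)$ produces a Dirichlet polynomial in $\chi$ of length $\asymp NX_t$; the dual piece, a polynomial in $\bar\chi$, is handled symmetrically (complex-conjugating the integrand lets the same large-sieve input cover it). The coefficients of these polynomials have $\ll (NX_t)^\epsilon$ mean square since $a(n)\ll n^\epsilon$ and $\sigma\ge 1/2$.

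To handle the $t$-dependence of $X_t$, I would split $[-T,T]$ into $O(\log(QkTN(1+|z|)))$ dyadic intervals $I_j$ on which $1+|t+\mathrm{Im}(z)|\asymp T_j$, so the AFE length is essentially a constant $R_j\asymp N\sqrt{QkT_j}$ on each piece. By Cauchy--Schwarz among the pieces and within each,
\[
\Bigl(\int_{-T}^{T}|LM|\,dt\Bigr)^{2} \ll (\log\cdots) \sum_{j} T_j \int_{I_j} |LM|^{2}\,dt.
\]
Summing the right-hand side over $q$ and $\chi$ via the hybrid large sieve for characters modulo $qk$---whose diagonal contribution is $\ll Q^{2}kT_j$ (using $\sum_{q\le Q}\phi(qk)\asymp Q^{2}k$) and whose off-diagonal, constrained by $qk\mid r_{1}-r_{2}$ together with the Montgomery--Vaughan bound for $\int(r_{1}/r_{2})^{-it}dt$, produces the length term---yields a bound $\ll(kQTN(1+|z|))^{\epsilon}(Q^{2}kT_j + R_j)$ for each piece. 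Summing over dyadic $T_j\le T+|\mathrm{Im}(z)|$ then gives the stated estimate, with the $(1+|z|)^{1/2}$ factor emerging from the accumulation of $T_j^{1/2}$ in the length term combined with the outer Cauchy--Schwarz factor of $T_j$.

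The main obstacle will be executing the hybrid large sieve for the family $\{\chi\bmod qk : q\le Q\}$ (which includes imprimitive characters) with the correct linear $k$-dependence in the $kQ^{2}T^{2}$ term and $k^{1/2}$-dependence in the length term: the off-diagonal analysis requires a careful divisor-sum estimate using $qk\mid r_{1}-r_{2}$, and the weight $\phi(qk)\le qk$ must be used sharply to avoid spurious factors of $k$. Likewise, arranging that the $(1+|z|)^{1/2}$ exponent is sharp hinges on the geometric structure of the dyadic sum, ensuring that the contributions from large $T_j\asymp T+|\mathrm{Im}(z)|$ dominate rather than accumulating uniformly across all scales.
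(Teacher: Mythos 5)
Your plan---replace $L$ by its approximate functional equation, fold it into $M$ to get one long Dirichlet polynomial, and apply the hybrid large sieve---is essentially what the paper does, but only in \emph{one} of two cases, and the single-case version leaves a genuine gap. After combining $L$ with $M$, the polynomial has length of order $N\sqrt{Qk(T+|z|)}$ (up to $\epsilon$ powers), and the outer Cauchy--Schwarz on the $t$-interval contributes an extra factor of $T$. The length term in your final bound is therefore of order $NT\sqrt{Qk(T+|z|)}$, which matches the target $QNTk^{1/2}(1+|z|)^{1/2}$ only when $T \ll Q(1+|z|)$; when $T \gg Q(1+|z|)$ your estimate is larger by a factor $\bigl((T+|z|)/(Q(1+|z|))\bigr)^{1/2}$, and this regime does occur in the paper's application of the proposition (where $Q$ is replaced by a parameter $L$ that can be as small as $1$ while $T$ is as large as a power of the outer modulus). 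The paper handles this by splitting into two cases according to whether the primitive conductor $\widetilde q\widetilde k$ of $\chi$ exceeds $T$. When $\widetilde q\widetilde k < T$ it does \emph{not} combine the factors; instead it applies Cauchy--Schwarz to separate $\bigl(\int|LM|\bigr)^2 \le \int|L|^2\cdot\int|M|^2$, bounds $\int_{-T}^{T}|L|^2\,dt \ll (1+|z|)^{1/2}T$ directly (possible since the approximate-functional-equation polynomial then has length $\ll (1+|z|)^{1/2}T$), and applies the hybrid large sieve to $M$ alone (length $N$). This avoids the $T^{3/2}$ blow-up entirely. You will need this dichotomy; a uniform AFE-plus-large-sieve argument cannot recover the claimed bound for large $T$.

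A secondary issue: you flag the passage from imprimitive to primitive characters modulo $qk$ as ``the main obstacle'' but do not carry it out. The paper does this explicitly by writing $\chi$ as induced by a primitive $\widetilde\chi\bmod \widetilde q\widetilde k$ with $(\widetilde q,k)=1$, $\widetilde k$ supported on primes dividing $k$, and $q=d\widetilde q\widetilde k/(k,\widetilde k)$; one then sums over $d$ and $\widetilde k$, and it is the identities $\sum_{\widetilde k}(k,\widetilde k)^2/\widetilde k \asymp k$ and $\sum_{\widetilde k}(k,\widetilde k)/\widetilde k \ll k^{\epsilon}$ that give the correct linear-in-$k$ diagonal and $k^{1/2}$ length dependence. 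This bookkeeping interacts with the case split (the case condition involves $d$ and $\widetilde k$), so it cannot be postponed until after the large-sieve step.
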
 

Postponing the proof of the proposition for the moment, we first finish the treatment of ${\mathcal U}^{E}_{\alpha,\beta}$.   From Proposition \ref{prop1} 
we may bound the quantity in \eqref{7.07} by 
\begin{align*} 
\ll \frac{Q^{1+\epsilon}(1+Q^{\vartheta}/L)^{k_1-1}}{A^2 E^2 L S_1^{k_1}} \sum_{AE \le r <4AE}  \int_{\text{Re}(z) = \epsilon} \frac{1}{|z|^{k_2}} &(AELQS_1(1+|z|))^{\epsilon} (1+|z|)^{\frac 12}\\
&\times \Big( AE L^2 S_1^2 + A^{\frac 12} E^{\frac 12} L Q^{\vartheta} S_1  \Big) |dz|.
\end{align*} 
Taking $k_2=2$, this simplifies to 
$$ 
\ll \frac{Q^{1+\epsilon}(1+Q^{\vartheta}/L)^{k_1-1}}{A^2 E^2 L S_1^{k_1}} (AELQS_1)^{\epsilon} \Big( A^2 E^2 L^2 S_1^2 + A^{\frac 32} E^{\frac 32} L Q^{\vartheta} S_1 \Big). 
$$ 
Take above $k_1=1$ if $S_1 \le 1+ Q^{\vartheta}/L$ and take $k_1=3$ if $S_1$ is larger.  Then, after summing over all the possible dyadic sizes of $S_1$, the above is seen to be $\ll (AELQ)^{\epsilon}Q(L+ Q^{\vartheta})$.   
Since we may restrict attention to $L \le AL\ll CQ^{\vartheta+\epsilon}$ and $AEL \ll Q^{12}$, it follows that ${\mathcal U}_{\alpha,\beta}^E \ll CQ^{1+\vartheta+\epsilon}$ as needed.

 \begin{proof}[Proof of Proposition \ref{prop1}]  We may clearly assume that the coefficients $a(n)$ are non-zero only when $(n,k)=1$.   
 We first pass from all non-principal characters $\bmod{qk}$ to primitive characters.  Suppose $\chi \bmod{qk}$ is induced by a primitive 
 character ${\widetilde \chi} \bmod{\widetilde{q}\widetilde{k}}$ where $(k, {\widetilde q})= 1$ and ${\widetilde k}$ is composed only of primes dividing $k$.  
 We write $q = d{\widetilde q} \widetilde{k}/(k,{\widetilde k})$ for some integer $d$.   Then we can recast our sum as 
 \begin{equation} 
 \label{7.11} 
 \ll (Qk)^{\epsilon} \sum_{\substack{\widetilde k \\ p|{\widetilde k} \implies p|k}} \sum_{d\le Q(k,\widetilde{k})/\widetilde{k}} \sum_{\substack{\widetilde{q}\le Q(k,\widetilde{k})/({\widetilde k}d) \\ (k,\widetilde{q})=1\\ \widetilde{k}\widetilde{q} >1 }} 
 \sum_{\widetilde{\chi} \bmod{ \widetilde{q}\widetilde{k}} }^* \Big( \int_{-T}^{T} |L(\sigma+it+z,\widetilde{\chi}) M_d(\sigma+it,\widetilde{\chi})|dt \Big)^2. 
 \end{equation} 
 Here the factor $(Qk)^{\epsilon}$ accounts for the difference between $L(s,\chi)$ and $L(s,\widetilde{\chi})$, and $M_d$ indicates that the Dirichlet polynomial $M$ 
 is restricted to terms coprime to $d$.  
 
 Given ${\widetilde k}$ and $d$, we now bound the sum over ${\widetilde q}$ and ${\widetilde \chi}$ using the hybrid large sieve (see Theorem 9.12 of \cite{IK}).   
 We do this in two different ways, depending on whether $T > Q(k,\widetilde{k})/d$ or not.  In the first case, by Cauchy-Schwarz 
 $$ 
  \Big( \int_{-T}^{T} |L(\sigma+it+z,\widetilde{\chi}) M_d(\sigma+it,\widetilde{\chi})|dt \Big)^2 \le \Big( \int_{-T}^{T} |L(\sigma+it+z,\widetilde{\chi})|^2 dt \Big) 
  \Big( \int_{-T}^{T} |M_d(\sigma+it,\widetilde{\chi})|^2 dt\Big). 
  $$ 
 Since $\widetilde{\chi}$ is not principal, approximating the $L$-function by a Dirichlet polynomial of length $\ll \sqrt{(1+|z|)T{\widetilde{q}}{\widetilde{k}}} \ll \sqrt{1+|z|} T$ we may bound the first integral on the 
 right side by $\ll (Qk)^{\epsilon} (1+|z|))^{\frac 12+ \epsilon}T^{1+\epsilon}$.    Therefore we obtain the bound 
 $$ 
 \ll (Qk)^{\epsilon} (1+|z|)^{\frac 12+ \epsilon} T^{1+\epsilon} \sum_{\substack{\widetilde{q}\le Q(k,\widetilde{k})/({\widetilde k}d) \\ (k,\widetilde{q})=1}} 
 \sum_{\widetilde{\chi} \bmod{ \widetilde{q}\widetilde{k}} }^* \int_{-T}^{T} |  M_d(\sigma+it,\widetilde{\chi})|^2dt,  
 $$
 which by the hybrid large sieve is 
 $$ 
 \ll (Qk N)^{\epsilon} (1+ |z|)^{\frac 12+ \epsilon} T^{1+\epsilon} \Big( Q^2 T \frac{(k,\widetilde k)^2}{\widetilde{k}d^2} + N \Big). 
 $$ 
 Summing this over all the possible $\widetilde k$ and $1\le d \le Q(k,{\widetilde k})/{\widetilde k}$ we arrive at the estimate 
 \begin{align} 
 \label{7.12} 
 &\ll (QkNT(1+|z|))^{\epsilon} \sum_{\substack{ \widetilde{k} \\ p| {\widetilde k} \implies p|k}} 
  \Big( Q^2 T^2 (1+|z|)^{\frac 12} \frac{(k,\widetilde{k})^2}{{\widetilde k}} + QNT(1+|z|)^{\frac 12} \frac{(k,\widetilde{k})}{\widetilde k}\Big) \nonumber \\ 
  &\ll (QkNT(1+|z|))^{\epsilon} \Big( k Q^2 T^2 (1+|z|)^{\frac 12} + QNT (1+|z|)^{\frac 12}\Big). 
 \end{align}

 Now we consider the case where $T\le Q (k,\widetilde{k})/d$, where by Cauchy-Schwarz we must estimate 
 $$ 
 T \sum_{\substack{\widetilde{q}\le Q(k,\widetilde{k})/({\widetilde k}d) \\ (k,\widetilde{q})=1}} 
 \sum_{\widetilde{\chi} \bmod{ \widetilde{q}\widetilde{k}} }^* \int_{-T}^{T}  |L(\tfrac 12+ it +z, \widetilde{\chi}) M_d(\sigma+it,\widetilde{\chi})|^2dt.
 $$ 
 An application of the hybrid large sieve bounds the above by 
 $$ 
 \ll (QkNT(1+|z|))^{\epsilon} T \Big( Q^2 T \frac{(k,\widetilde{k})^2}{\widetilde{k} d^2} + N \Big( \frac{QT (k,\widetilde{k}) (1+|z|)}{d}\Big)^{\frac 12}\Big), 
 $$ 
 and summing this over ${\widetilde k}$ and $d\le Q (k,\widetilde{k})/\max(T,\widetilde{k})$ we obtain the estimate 
  \begin{align} 
 \label{7.13} 
 &\ll (QkNT(1+|z|))^{\epsilon} \sum_{\substack{ \widetilde{k} \\ p| {\widetilde k} \implies p|k}} 
 \Big( Q^2 T^2 \frac{(k, \widetilde{k})^2}{\widetilde{k}}  + QN T \Big( \frac{ T(1+|z|) (k, \widetilde{k})}{\max(T, {\widetilde{k}})}\Big)^{\frac 12} \Big)\nonumber \\ 
   &\ll  (QkNT(1+|z|))^{\epsilon}  \Big( Q^2 T^2 k + QNT k^{\frac 12} (1+|z|)^{\frac 12} \Big).   
 \end{align}
 
 Since either \eqref{7.12} or \eqref{7.13} applies, the proposition follows.  
 \end{proof} 
 

\section{The principal character contribution}

\noindent In this section we work on the principal  character contribution arising from \eqref{6.5}.  For technical reasons that will become 
clear later, we introduce a small smoothing of the weight functions appearing in \eqref{6.5}.   Let $\delta$ denote a small parameter, which we 
shall choose as $\delta = Q^{-10}$, and define for positive real numbers $x$, $y$, and $t$
\begin{equation} \label{8.01} 
{\mathcal W}_{\alpha,\beta}(x,y;t) = \frac{1}{2\delta} \int_{-\delta}^{\delta} W_{\alpha,\beta}(|x \pm e^{\xi} y|) V_{\alpha,\beta}\Big( \frac{t}{|x\pm e^{\xi} y|} \Big) d\xi. 
\end{equation} 
A small calculation, using the rapid decay of $V_{\alpha,\beta}$ and that $W_{\alpha,\beta}$ is supported on $[1,2]$, shows that (for some constant $\tau >0$) 
\begin{equation} 
\label{8.02} 
{\mathcal W}_{\alpha,\beta} (x,y;t) = W_{\alpha,\beta}(|x\pm y|) V_{\alpha,\beta}\Big( \frac{t}{|x\pm y|}\Big) + O(\delta y e^{-\tau t}). 
\end{equation}  
Therefore we may recast \eqref{6.5} as 
\begin{align} 
\label{8.03} 
{\mathcal U}_{\alpha,\beta}^0(h,k) & = \frac{Q}{2} \Big(\frac{Q}{\pi}\Big)^{\frac{\alpha+\beta}{2}} \sum_{\substack{ (c,hk)=1 \\ c\le C}} \frac{\mu(c)}{c} 
 \sum_{\substack{ m, n\\ (c,mn)=1\\ mh\neq nk}} \sum_{ \substack{a | g \\ (e,g)= 1} } \mu (a) \frac{\mu(e)}{e}\frac{1}{m^{\frac 12+\alpha}} \frac{1}{n^{\frac 12+\beta}}  \nonumber \\ 
 &\hskip 1 in \sum_{\substack{ \ell \\ (ae \ell, mnhk/g^2)=1} } \frac{1}{\phi(ae\ell)} {\mathcal W}_{\alpha, \beta} \Big( \frac{cmh}{Qg\ell}, \frac{cnk}{Qg\ell}; \frac{\pi mn}{Q}\Big)  +O(Q^{-1}).  
 \end{align}

Treating $c$, $m$, $n$, $a$ and $e$ as given, we work on simplifying the sum over $\ell$ in \eqref{8.03}.   By Mellin inversion, 
the sum over $\ell$ may be written as 
\begin{equation} 
\label{8.1}
\sum_{\substack{\ell \\ (ae\ell, mnhk/g^2)=1} } \frac{1}{\phi(ae\ell)} \frac{1}{2\pi i} \int_{(\epsilon)} \ell^{-w} {\widetilde {\mathcal W}}_{\alpha,\beta}(w) dw, 
\end{equation} 
where (suppressing the dependence of ${\widetilde {\mathcal W}}_{\alpha,\beta}$ on $c$, $m$, $n$, $h$, $k$, $g$) 
\begin{equation} 
\label{8.2} 
\widetilde{{\mathcal W}}_{\alpha,\beta}(w) = 
 \int_0^\infty  {\mathcal W}_{\alpha,\beta} \Big( \frac{cmh}{Qgx}, \frac{cnk}{Qgx}; \frac{\pi mn}{Q}\Big)  x^w \frac{dx}{x}. 
 \end{equation} 
 
 \begin{lemma} \label{lem8.1}  Let $s$ be a complex number with Re$(s)>0$, and let $u$ and $v$ be coprime natural numbers. 
 Then 
 $$
 \sum_{\substack{ \ell =1 \\ (\ell,v)=1}}^{\infty} \frac{1}{\phi(u\ell) \ell^s}  = \frac{1}{\phi(u)} \zeta(1+s) R(s;u,v), 
 $$ 
 where 
 $$
 R(s;u,v) = \prod_{p| v} \Big(1- \frac{1}{p^{s+1}} \Big) \prod_{p\nmid uv} \Big(1 + \frac{1}{p^{s+1}(p-1)}\Big) 
 $$ 
 converges absolutely in Re$(s) > -1$. 
 \end{lemma}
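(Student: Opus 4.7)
The plan is to reduce the sum to an Euler product by separating the variable $\ell$ into parts interacting with $u$ and parts coprime to $u$, exploiting the multiplicativity of $\phi$.

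First I would write $\ell = \ell_1 \ell_2$, where every prime factor of $\ell_1$ divides $u$ and $(\ell_2, u) = 1$. Since $\phi$ is multiplicative and $u\ell_1$ has the same set of prime divisors as $u$, one gets the clean identity
\[
\phi(u\ell) \;=\; \phi(u\ell_1)\,\phi(\ell_2) \;=\; \ell_1\,\phi(u)\,\phi(\ell_2).
\]
This is the key factorization: it pulls out $\phi(u)$ (explaining the $1/\phi(u)$ in the claim) and replaces the awkward interaction between $u$ and $\ell$ by a harmless factor of $\ell_1$. The hypothesis $(u,v)=1$ means the coprimality condition $(\ell,v)=1$ becomes $(\ell_2,v)=1$ automatically on $\ell_1$, so together with $(\ell_2,u)=1$ we get $(\ell_2, uv) = 1$.

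Next, the sum splits as a product:
\[
\sum_{(\ell,v)=1} \frac{1}{\phi(u\ell)\ell^s} \;=\; \frac{1}{\phi(u)} \Bigl(\sum_{p|\ell_1 \Rightarrow p|u} \frac{1}{\ell_1^{s+1}}\Bigr)\Bigl(\sum_{(\ell_2,uv)=1} \frac{1}{\phi(\ell_2)\ell_2^s}\Bigr).
\]
The first factor is $\prod_{p|u}(1-p^{-s-1})^{-1}$. For the second, I compute the local factor at each $p \nmid uv$: summing the geometric series $\sum_{k\geq 0} (\phi(p^k) p^{ks})^{-1}$ gives, after a short calculation, $(1-p^{-s-1})^{-1}\bigl(1 + p^{-s-1}/(p-1)\bigr)$.

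Finally I multiply the two Euler products. The $(1-p^{-s-1})^{-1}$ factors combine over all primes not dividing $v$, so inserting the missing primes dividing $v$ yields $\zeta(s+1)\prod_{p|v}(1-p^{-s-1})$; the remaining correction factor at $p\nmid uv$ is exactly $1 + 1/(p^{s+1}(p-1))$, matching the definition of $R(s;u,v)$. Absolute convergence of $R(s;u,v)$ in $\mathrm{Re}(s)>-1$ is immediate since the Euler factor equals $1 + O(p^{-s-2})$ there. There is no real obstacle here beyond careful bookkeeping; the one step worth flagging is the factorization $\phi(u\ell) = \ell_1\phi(u)\phi(\ell_2)$, which is what makes the whole identity collapse to a clean Euler product.
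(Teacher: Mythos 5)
Your proof is correct and is essentially the verification via Euler products that the paper alludes to; the factorization $\ell=\ell_1\ell_2$ with $\phi(u\ell)=\ell_1\phi(u)\phi(\ell_2)$ is just a tidy way of organizing that multiplicative computation. The local factor at $p\nmid uv$ works out as you say, namely $1+\sum_{k\ge 1}(\phi(p^k)p^{ks})^{-1}=(1-p^{-s-1})^{-1}\bigl(1+\tfrac{1}{p^{s+1}(p-1)}\bigr)$, and assembling the $(1-p^{-s-1})^{-1}$ factors over $p\nmid v$ produces $\zeta(1+s)\prod_{p|v}(1-p^{-s-1})$ as required.
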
 
  \begin{proof}   This is a simple verification using Euler products. 
  \end{proof} 
  
 We use Lemma \ref{lem8.1} in \eqref{8.1}  and move the line of integration to the $(-\epsilon)$ line.  Computing the 
 residue of the pole at $w=0$,  the quantity  in \eqref{8.1} becomes 
 \begin{equation} 
 \label{8.3} 
\frac{ {\widetilde {\mathcal W}}_{\alpha,\beta}(0) }{\phi(ae)} R(0;ae,mnhk/g^2) + \frac{1}{2\pi i \phi(ae)} \int_{(-\epsilon)} \zeta(1+w) R(w;ae, mnhk/g^2) 
{\widetilde {\mathcal W}}_{\alpha,\beta}(w) dw.
\end{equation}  
Using this in \eqref{8.03}, the quantity ${\mathcal U}_{\alpha,\beta}^0(h,k)$ splits into ${\mathcal U}_{\alpha,\beta}^1(h,k) + {\mathcal U}_{\alpha,\beta}^2(h,k)+O(Q^{-1})$, 
with the first term denoting the contribution of the residue at $w=0$, and the second term denoting the remaining integral.

 \subsection{The contribution of ${\mathcal U}_{\alpha,\beta}^1(h,k)$}    Here we show that ${\mathcal U}_{\alpha,\beta}^1(h,k)$ 
 cancels out the contribution of ${\mathcal L}^0_{\alpha,\beta}(h,k)$.  First we perform the sum over $a$ and $e$.  Note that 
 ${\widetilde {\mathcal W}}_{\alpha,\beta}$ is independent of $a$ and $e$, and for any $w$ with Re$(w)>-1$ we have (with a 
 small calculation using that $a|g$ and $(e,g)=1$ so that $(a,e)=1$)   
 \begin{align} 
 \label{8.4} 
 \sum_{\substack{a|g \\ (a,mnhk/g^2)=1} } \mu(a) \sum_{(e,mnhk/g)=1} &\frac{\mu(e)}{e\phi(ae)} R(w;ae, mnhk/g^2)  
 =  
 \prod_{p|mnhk/g^2} \Big( 1-\frac{1}{p^{1+w}}\Big) \nonumber \\
 & \times \prod_{\substack{ p|g \\ p \nmid mnhk/g^2}} \Big( 1+ \frac{1}{p^{1+w}(p-1)} -\frac{1}{p-1}\Big) \prod_{p\nmid mnhk/g} \Big( 1+ \frac{p^{-w}-1}{p(p-1)} \Big). 
 \end{align} 
 Evaluating this at $w=0$, we see that  the sum of the first term in \eqref{8.3} over the 
 relevant $a$ and $e$ is 
 $$ 
 {\widetilde {\mathcal W}}_{\alpha,\beta}(0) \prod_{p| mnhk/g^2} \Big(1-\frac 1p\Big) \prod_{\substack{p|g \\ p \nmid{mnhk/g^2}}} \Big(1-\frac 1p\Big) =  {\widetilde {\mathcal W}}_{\alpha,\beta}(0)\frac{\phi(mnhk)}{mnhk}. 
 $$  
 
 Now from \eqref{8.2} and a change of variables we find 
 $$ 
 {\widetilde {\mathcal W}}_{\alpha,\beta}(0) = 2 \int_0^{\infty} W_{\alpha,\beta}\Big(\frac{x}{Q}\Big) V_{\alpha,\beta} \Big( \frac{\pi mn}{x}\Big) \frac{dx}{x}, 
 $$ 
 where the factor $2$ arises from our convention that $| mh \pm nk|$ indicates a sum over both possible signs. 
 Combining this with our calculations above, we conclude that 
 \begin{align*}
 {\mathcal U}_{\alpha,\beta}^{1}(h,k) = {Q}\Big(\frac{Q}{\pi} \Big)^{\frac{\alpha+\beta}{2}} \sum_{\substack{(c,hk)=1 \\ c\le C} } 
 \frac{\mu(c)}{c} \sum_{\substack{ m, n \\ (c,mn)=1 \\ mh\neq nk}}& \frac{1}{m^{\frac 12+\alpha} n^{\frac 12+\beta}} \frac{\phi(mnhk)}{mnhk} 
\\
& \Big( \int_0^{\infty} W_{\alpha,\beta}\Big(\frac{x}{Q}\Big) V_{\alpha,\beta} \Big( \frac{\pi mn}{x}\Big) \frac{dx}{x}\Big).
\end{align*} 
 Note that this cancels (up to an acceptable error) with our expression for ${\mathcal L}^0_{\alpha,\beta}(h,k)$ in \eqref{5.3}. 

  \subsection{The terms $\mathcal{U}^2_{\alpha,\beta}(h,k)$}  Write 
  \begin{equation} 
  \label{8.5} 
  R_1(w; u,v) = \prod_{p|v} \Big(1-\frac{1}{p^{1+w}}\Big) \prod_{\substack{p|u \\ p\nmid v}} \Big( 1+ 
  \frac{1}{p^{1+w}(p-1)} - \frac{1}{p-1}\Big) \prod_{p \nmid uv} \Big( 1+ \frac{p^{-w}-1}{p(p-1)}\Big), 
  \end{equation} 
  so that the right hand side of \eqref{8.4} equals $R_1(w;g,mnhk/g^2)$.  
  Using the notation \eqref{8.5}, together with \eqref{8.4}, \eqref{8.3}, \eqref{8.1} and finally \eqref{8.03}, we see that 
  \begin{align} 
  \label{8.6} 
  {\mathcal U}^2_{\alpha,\beta}(h,k) &= \frac Q2 \Big(\frac{Q}{\pi}\Big)^{\frac{\alpha+\beta}{2}} \sum_{\substack{c\le C \\ (c,hk)=1}}
  \frac{\mu(c)}{c} \sum_{\substack{m , n \\ (c,mn)=1 \\ mh\neq nk }} \frac{1}{m^{\frac 12+\alpha} n^{\frac 12+\beta}} \nonumber \\ 
  &\hskip 1 in \times \frac{1}{2\pi i} \int_{(-\epsilon)} 
  \zeta(1+w) R_1(w;g,mnhk/g^2) \widetilde{\mathcal W}_{\alpha,\beta}(w) dw. 
  \end{align}
  Recall that $\widetilde{\mathcal W}_{\alpha,\beta}$ was defined in \eqref{8.2}, and that it depends on $c$, $m$, $n$, $h$, $k$, and $g$.  
  
  We now express $\widetilde{\mathcal W}_{\alpha,\beta}(w)$ in a more useful form, making explicit the dependencies on $c$, $m$, $n$, $h$, $k$, $g$, before continuing with the evaluation of \eqref{8.6}.   With the change of variable $y=Qgx/(c|mh \pm e^{\xi} nk|)$ (the denominator can be zero for at most one value of $\xi$, and this is 
  irrelevant for the integral over $\xi$ below) we obtain 
  \begin{equation} 
  \label{8.61} 
  {\widetilde{\mathcal W}}_{\alpha,\beta}(w) =\Big( \frac{1}{2\delta} \int_{-\delta}^{\delta}  \Big( \frac{c|mh\pm e^{\xi} nk|}{Qg}\Big)^w d\xi \Big) 
  \Big( \int_0^{\infty} W_{\alpha,\beta}(1/y) V_{\alpha,\beta}\Big( 
  \frac{\pi mn}{Q}y \Big) y^w \frac {dy}{y}\Big). 
  \end{equation} 
  Using \eqref{3.2}, we see that the second factor above is 
  \begin{align} 
  \label{8.7} 
    &=  \frac{1}{2\pi i} \int_{(1)} 
    {\widetilde V}_{\alpha,\beta}(s) \Big( \frac{Q}{\pi mn}\Big)^s \Big( \int_0^{\infty} W_{\alpha,\beta}(1/y) y^{w-s} \frac{dy}{y} \Big) \frac{ds}{s} \nonumber 
    \\ 
    &=  \frac{1}{2\pi i} \int_{(1)} 
    {\widetilde V}_{\alpha,\beta}(s) \Big( \frac{Q}{\pi mn}\Big)^s  {\widetilde W}_{\alpha,\beta}(s-w) \frac{ds}{s}. 
    \end{align} 
   We point out the difference between ${\widetilde {\mathcal W}}_{\alpha,\beta}$ and $\widetilde{W}_{\alpha,\beta}$ (the Mellin transform of $W_{\alpha,\beta}$), which    we hope will not cause confusion.      
   
   To handle the first factor in \eqref{8.61} we require the following integration formula, which is where the smoothing 
   of the weight functions introduced above will be useful.  
  Given complex numbers $z$ and $w$, we 
define 
 \begin{equation}
 \label{7.1} 
 \mathcal{H}(w,z) =2^{z}\sin\Big(\frac{\pi z}{2}\Big) \Gamma(1-z)\frac{\Gamma(\frac w 2)\Gamma(\frac{z-w}{2})}{\Gamma(\frac{1-w}{2})\Gamma(\frac{1-z+w}{2})} = 
 \sqrt{\pi} \frac{\Gamma(\frac{1-z}{2}) \Gamma(\frac{w}{2}) \Gamma(\frac{z-w}{2})}{\Gamma(\frac z2) \Gamma(\frac{1-w}{2}) \Gamma(\frac{1-z+w}{2})}. 
 \end{equation}
The  variables $w$ and $z$ in \eqref{7.1} and the following proposition are temporary, and in particular, we warn that $w$ is not the variable of \eqref{8.61}.  

 
 \begin{proposition}   Let $z=x+iy$ be a complex number with Re$(z)=x >0$.    Then for any $0< c < x$, and $r>0$ with $r\neq 1$, we 
 have 
 \begin{equation} 
 \label{7.2}
 |1\pm r|^{-z} = |1+r|^{-z} + |1-r|^{-z} = \frac{1}{2\pi i} \int_{(c)} {\mathcal H}(w,z) r^{-w}~dw. 
 \end{equation} 
 Therefore, for any $\delta >0$,  
\begin{equation} 
\label{7.3}
 \frac{1}{2\delta} \int_{-\delta}^{\delta} |1 \pm e^{\xi} r|^{-z} d\xi = \frac{1}{2\pi i} \int_{(c)} {\mathcal H}(w,z) r^{-w} \frac{e^{\delta w}-e^{-\delta w}}{2\delta w} dw, 
 \end{equation} 
 and the integral above converges absolutely for $x <1$.  
\end{proposition}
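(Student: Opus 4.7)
The plan is to establish \eqref{7.2} by computing the Mellin transform of $g(r) := |1+r|^{-z} + |1-r|^{-z}$ explicitly, and then to deduce \eqref{7.3} by averaging over $\xi$. In the strip $0 < \operatorname{Re}(w) < \min(\operatorname{Re}(z),1)$ (my initial domain of absolute convergence), the Mellin transform of $g$ splits as a sum of three classical beta integrals: $\int_0^\infty(1+r)^{-z}r^{w-1}\,dr = \Gamma(w)\Gamma(z-w)/\Gamma(z)$ (via $u = r/(1+r)$), $\int_0^1(1-r)^{-z}r^{w-1}\,dr = \Gamma(w)\Gamma(1-z)/\Gamma(1-z+w)$, and, after $r\mapsto 1/r$, $\int_1^\infty(r-1)^{-z}r^{w-1}\,dr = \Gamma(z-w)\Gamma(1-z)/\Gamma(1-w)$.

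The hard part is collapsing this three-term sum to $\mathcal{H}(w,z)$. Factoring out $\Gamma(w)\Gamma(z-w)$ and applying the reflection formula $\Gamma(a)\Gamma(1-a)=\pi/\sin\pi a$ to both $\Gamma(w)\Gamma(1-w)$ and $\Gamma(z-w)\Gamma(1-z+w)$, and rewriting $1/\Gamma(z)$ as $\Gamma(1-z)\sin\pi z/\pi$, the Mellin transform of $g$ becomes
\[
\frac{\Gamma(w)\Gamma(z-w)\Gamma(1-z)}{\pi}\bigl(\sin\pi z + \sin\pi(z-w) + \sin\pi w\bigr).
\]
Two sum-to-product steps reduce the bracket to $4\sin(\pi z/2)\cos(\pi(z-w)/2)\cos(\pi w/2)$. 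Finally, converting each cosine via $\cos(\pi s/2) = \pi/[\Gamma((1+s)/2)\Gamma((1-s)/2)]$ and applying the duplication formula to $\Gamma(w)$ and $\Gamma(z-w)$ matches the product exactly: the $\Gamma((1+w)/2)$ and $\Gamma((1+z-w)/2)$ factors cancel, and what is left is $2^{z}\sin(\pi z/2)\Gamma(1-z)\,\Gamma(w/2)\Gamma((z-w)/2)/[\Gamma((1-w)/2)\Gamma((1-z+w)/2)] = \mathcal{H}(w,z)$. The order of reflection, duplication, and sum-to-product here is the only delicate point; everything else is bookkeeping.

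Mellin inversion then yields \eqref{7.2} for $0 < c < \min(\operatorname{Re}(z),1)$, and I extend this to the full range $0<c<\operatorname{Re}(z)$ by contour shifting: in the strip $0<\operatorname{Re}(w)<\operatorname{Re}(z)$ the factor $\Gamma(w/2)$ has no poles (its poles lie at $w=0,-2,\dots$), $\Gamma((z-w)/2)$ has no poles (they lie at $w=z,z+2,\dots$), and $1/\Gamma((1-w)/2)$, $1/\Gamma((1-z+w)/2)$ contribute only zeros, so the shift is free. The auxiliary restriction $\operatorname{Re}(z)<1$ is removed by analytic continuation in $z$, since both sides of \eqref{7.2} are meromorphic in $z$ at fixed $r\neq 1$.

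For \eqref{7.3} I would replace $r$ by $e^\xi r$ in \eqref{7.2} and average over $\xi\in[-\delta,\delta]$; Fubini applies because the $\xi$-average is over a compact set, and the $\xi$-integral of $(e^\xi r)^{-w}$ yields exactly $r^{-w}(e^{\delta w}-e^{-\delta w})/(2\delta w)$. For the absolute convergence claim when $x<1$, Stirling on the line $\operatorname{Re}(w)=c$ shows that the exponential factors from the four gamma quotients in $\mathcal{H}(w,z)$ cancel in pairs, leaving polynomial growth of order $|t|^{x-1}$; the smoothing factor $(e^{\delta w}-e^{-\delta w})/(2\delta w)$ is bounded and of size $O(1/|t|)$ on this vertical line, so the full integrand is $\ll |t|^{x-2}$, which is integrable precisely when $x<1$.
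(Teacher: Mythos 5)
Your proof is correct but goes in the opposite direction from the paper's. The paper starts from the contour integral on the right of \eqref{7.2}: for $r>1$ it shifts the line of integration to the right, collects the residues at $w=z+2k$, and recognizes the resulting series as the binomial expansion of $(r+1)^{-z}+(r-1)^{-z}$ (and symmetrically, shifting left, for $r<1$). You instead compute the Mellin transform of $|1+r|^{-z}+|1-r|^{-z}$ from scratch as a sum of three Beta integrals, and then use reflection, sum-to-product, and duplication to collapse that three-term sum to $\mathcal H(w,z)$; \eqref{7.2} then comes out of Mellin inversion. Your route is arguably more illuminating, since it actually explains \emph{why} $\mathcal H(w,z)$ has that shape (it is the Mellin transform), whereas the paper's residue computation verifies the identity without deriving the kernel. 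The trade-off is that your direct computation only converges in the strip $0<\mathrm{Re}(w)<\min(\mathrm{Re}(z),1)$, forcing the auxiliary restriction $\mathrm{Re}(z)<1$ which you then remove by analytic continuation; the paper's residue argument works for all $\mathrm{Re}(z)>0$ at once. Two small points worth flagging: the Mellin inversion step is only conditional (since $\mathcal H(w,z)\ll|t|^{x-1}$ with $x>0$ is not absolutely integrable on a vertical line), so \eqref{7.2} should be read as the symmetric limit $\lim_{T\to\infty}\int_{c-iT}^{c+iT}$, as the paper explicitly says; and your analytic-continuation argument for removing $\mathrm{Re}(z)<1$ should justify that the right side of \eqref{7.2} remains analytic in $z$ past $x=1$, where the integral may cease to converge even conditionally. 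Neither of these affects \eqref{7.3}, whose absolute convergence for $x<1$ you establish exactly as the paper does, and that is the range in which the proposition is actually used.
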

\begin{proof}   We first establish \eqref{7.2}, whose proof is similar to the standard proofs of  
Perron's formula, see \cite{T}.    If $r >1$ then move the line of integration in \eqref{7.2} to the right.  
We encounter simple poles at $w= z+2k$ for all non-negative integers $k$.  Noting that the contour is oriented clockwise, the 
contribution of the residue at $w=z+2k$ equals 
$$ 
\sqrt{\pi} \frac{\Gamma(\frac{1-z}{2}) \Gamma(\frac{z}{2}+k)}{\Gamma(\frac z2) \Gamma(\frac{1-z}{2} -k) \Gamma(k+\frac 12)} \Big( 2 \frac{(-1)^k}{k!} r^{-z -2k}\Big) = 2 r^{-z-2k} \frac{z(z+1)\ldots (z+2k-1)}{(2k)!}.  
$$ 
Summing this over all non-negative integers $k$ gives 
$$ 
2 r^{-z} \sum_{k=0}^{\infty} r^{-2k} \frac{z(z+1)\ldots (z+2k-1)}{(2k)!} = (r+1)^{-z} + (r-1)^{-z}. 
$$ 
If $r<1$ then we move the line of integration to the left, encountering poles at $w=-2k$ for non-negative integers $k$, and argue similarly.  The 
identity \eqref{7.3} follows  by integrating \eqref{7.2}.   

The integral in \eqref{7.2} is only conditionally convergent, and should be interpreted symmetrically as $\lim_{T\to \infty} \int_{c-iT}^{c+iT}$.    The variant  
given in \eqref{7.3} has the advantage of giving absolutely convergent integrals.  To see this, we use Stirling's formula to 
 conclude that ${\mathcal H}(w,z) \ll (1+|w|)^{x-1}$ 
for $w$ bounded away from the poles of ${\mathcal H}(w,z)$.   Therefore if $x<1$, the integral in \eqref{7.3} converges absolutely.   
\end{proof}

We now return to the first factor in \eqref{8.61}, using Proposition 2 to give an expression for it.  
%
  Thus (recalling Re$(w)=-\epsilon$) we have 
  \begin{align*}
  \frac{1}{2\delta} \int_{-\delta}^{\delta}  \Big( \frac{c|mh \pm e^{\xi} nk|}{Qg} \Big)^w  d\xi &= \Big( \frac{cmh}{Qg}\Big)^w \frac{1}{2\delta} \int_{-\delta}^{\delta}  \Big| 1 \pm e^{\xi} \frac{nk}{mh} \Big|^w d\xi  \\ 
  &= 
   \Big( \frac{cmh}{Qg}\Big)^w \frac {1}{2\pi i} \int_{(\epsilon/2)} {\mathcal H}(z,-w) \Big(\frac{mh}{nk}\Big)^z \frac{e^{\delta z} -e^{-\delta z}}{2\delta z} dz. 
  \end{align*} 
 Combining this with \eqref{8.7}, we conclude that 
  \begin{align} 
  \label{8.9}
  {\widetilde {\mathcal W}}_{\alpha,\beta}(w)& = 
  \frac{1}{(2\pi i)^2} \int_{\substack{ \text{Re}(s)=1 \\ \text{Re}(z)=\epsilon/2}} {\widetilde V}_{\alpha,\beta}(s) {\widetilde W}_{\alpha,\beta}(s-w) {\mathcal H}(z,-w) \nonumber \\ 
&\hskip 1  in \times  \Big(\frac{Q}{\pi mn}\Big)^s \Big( \frac{cmh}{gQ}\Big)^w \Big(\frac{hm}{nk}\Big)^z   \frac{e^{\delta z}-e^{-\delta z}}{2\delta z} \frac{ds}{s} dz.
  \end{align}

  We use \eqref{8.9} in the formula \eqref{8.6}, and bring in the sums over $m$ and $n$.    The condition that $mh \neq nk$ can now be 
  discarded with a negligible error of $\ll Q^{1+\epsilon} (hk)^{\epsilon}$ by moving the line of integration in $s$ to Re$(s) =4\epsilon$ here.    Using 
  this to reintroduce the terms $mh=nk$ in \eqref{8.6}, we define 
 (given $z$ with Re$(z) =\epsilon/2$, and 
  $w$ with Re$(w)=-\epsilon$)  
\begin{equation} 
\label{8.10} 
{\mathcal K}(s,z,w; c,h,k) = \sum_{\substack{m, n \\ (mn,c)=1} } \frac{1}{m^{\frac 12+ \alpha}} \frac{1}{n^{\frac 12+\beta}}   \frac{1}{(mn)^s} \frac{m^{z+w}}{g^w n^z} R_1(w;g,mnhk/g^2). 
\end{equation}  
For $z$ and $w$ as above, the sums over $m$ and $n$ are absolutely convergent for Re$(s)> \frac 12+ 4\epsilon$ say.   By considering Euler products, we may write 
\begin{equation} 
\label{8.11} 
{\mathcal K}(s,z,w; c,h,k)= \zeta(\tfrac 12+ \alpha+ s -z-w) \zeta(\tfrac 12+ \beta +s +z)  {\mathcal K}_1(s,z,w; c,h,k), 
\end{equation} 
where ${\mathcal K}_1$ is absolutely convergent for Re$(s) > 4\epsilon$.

  Thus 
  \begin{align} 
  \label{8.12} 
  {\mathcal U}_{\alpha,\beta}^{2} (h,k) &= \frac Q2 \Big(\frac{Q}{\pi} \Big)^{\frac{\alpha+\beta}{2} } \sum_{\substack{ c\le C \\ (c,hk)=1} } 
  \frac{\mu(c)}{c} 
  \frac{1}{(2\pi i)^2} \int_{\substack{ \text{Re}(w) = -\epsilon \\ \text{Re}(z)= \epsilon/2}}  \zeta(1+w) {\mathcal H}(z,-w) \frac{c^w h^{w+z}}{Q^wk^z} 
  \nonumber \\ 
 & \hskip .25 in \times \frac{1}{2\pi i} \int_{\text{Re}(s) =1}  \Big( \frac{Q}{\pi}\Big)^s {\widetilde V}_{\alpha,\beta}(s) \widetilde{W}_{\alpha,\beta}(s-w) {\mathcal K}(s,z,w;c,h,k) 
 \Big( \frac{e^{\delta z} -e^{-\delta z}}{2\delta z} \Big) 
  \frac{ds}{s} dz dw. 
  \end{align} 
Now we use \eqref{8.11}, and move the line of integration in $s$ to Re$(s) = 4\epsilon$.   In doing so, we encounter poles at 
$s= \frac 12 + z +w -\alpha$, and $s= \frac 12 -\beta -z$, and the remaining integrals with Re$(s)=4\epsilon$ may be bounded by $\ll Q^{1+\epsilon} (hk)^{\epsilon}$. 
  
 Next we examine the contribution of the pole at $s= \frac 12 -\beta -z$ (which will turn out to be unimportant).   This contributes to ${\mathcal U}_{\alpha, \beta}^2 (h,k)$ an amount 
 \begin{align*} 
 &\frac Q2 \Big( \frac{Q}{\pi}\Big)^{\frac{\alpha+\beta}{2}} \sum_{\substack{ c\le C \\ (c,hk)=1} } 
  \frac{\mu(c)}{c} 
 \frac{1}{(2\pi i)^2}  \int_{\substack{ \text{Re}(w) = -\epsilon \\ \text{Re}(z)= \epsilon/2}}   \zeta(1+w) {\mathcal H}(z,-w) \frac{c^w h^{w+z}}{Q^wk^z} \zeta(1+\alpha-\beta-2z-w) \\
 &\times {\mathcal K}_1(\tfrac 12-\beta-z,z,w;c,h,k) \Big(\frac{Q}{\pi}\Big)^{\frac 12-\beta-z} \frac{{\widetilde V}_{\alpha,\beta}(\tfrac12 -\beta-z)}{\frac 12-\beta -z} {\widetilde W}_{\alpha,\beta}(\tfrac 12-\beta-z-w) \Big( \frac{e^{\delta z}- e^{-\delta z}}{2\delta z}\Big) dz dw. 
\end{align*}
 Here we move the line of integration in $z$ to Re$(z) =1/2- \epsilon$ and then bound this contribution by $\ll Q^{1+\epsilon} \sqrt{h/k}$, which is acceptable as 
 $$ 
 \sum_{h, k} \frac{|\lambda_h \lambda_k|}{\sqrt{hk}} Q^{1+\epsilon} \sqrt{h/k} \ll Q^{1+\vartheta +\epsilon}.
 $$

  Now we turn to the contribution of the important pole at $s= \frac 12 +z + w-\alpha$, which accounts for new main terms to 
  go with the diagonal contribution ${\mathcal D}_{\alpha, \beta}(h,k)$.   The contribution of this pole to ${\mathcal U}_{\alpha,\beta}^{2}(h,k)$ is 
\begin{align*} 
 \frac Q2 &\Big( \frac{Q}{\pi}\Big)^{\frac{\alpha+\beta}{2}} \sum_{\substack{ c\le C \\ (c,hk)=1} } 
  \frac{\mu(c)}{c} 
 \frac{1}{(2\pi i)^2}  \int_{\substack{ \text{Re}(w) = -\epsilon \\ \text{Re}(z)= \epsilon/2}}   \zeta(1+w) {\mathcal H}(z,-w) \frac{c^w h^{w+z}}{Q^wk^z} \zeta(1+\beta -\alpha +2z +w )\\
 &\times  {\mathcal K}_1(\tfrac 12 -\alpha+z+w, z,w;c,h,k) \Big(\frac{Q}{\pi} \Big)^{\frac 12+z+w-\alpha} \frac{{\widetilde V}_{\alpha,\beta}(\frac 12 -\alpha+z+w)}{\frac 12 -\alpha +z+w} \widetilde{W}_{\alpha,\beta}(\tfrac 12 +z-\alpha) dz dw. 
 \end{align*}
 Now we move the line of integration in $z$ to Re$(z) = -\frac 12+ 3\epsilon$, passing through a pole at $z=(\alpha-\beta-w)/2$.  
The remaining integral contributes $\ll Q^{1+\epsilon}\sqrt{k/h}$ (which as before is acceptable), while the residue of this pole gives 
 \begin{align*} 
 \frac{Q}{2} \Big(\frac{Q}{\pi}\Big)^{\frac{\alpha+\beta}{2}} \sum_{\substack{ c\le C \\ (c,hk)=1} } 
  \frac{\mu(c)}{c}& \frac{1}{2\pi i} \int_{\text{Re}(w)=-\epsilon} \zeta(1+w) {\mathcal H}\Big( \frac{\alpha-\beta-w}{2},-w\Big) \frac{c^wh^{\frac{\alpha-\beta+w}{2}}}{Q^w k^{\frac{\alpha-\beta-w}{2}} }\Big(\frac{Q}{\pi}\Big)^{\frac{1-\alpha-\beta+w}{2}} \\ 
  &\times{\mathcal K}_1\Big(\frac{1-\alpha-\beta+w}{2}, \frac{\alpha-\beta-w}{2},w;c,h,k\Big) 
  \frac{{\widetilde V}_{\alpha,\beta}((1-\alpha-\beta+w)/2)}{(1-\alpha-\beta+w)}\\
  &\times {\widetilde W}_{\alpha,\beta}\Big( \frac{1-\alpha-\beta-w}{2}\Big) \Big( 1+ O(|w| \delta) \Big) dw. 
  \end{align*}
  Since $\delta =Q^{-10}$ the error term above is negligible, and making 
 the substitution $w=-1+\alpha+\beta -2s$ and see that the above equals 
 \begin{align} 
 \label{8.13} 
 -\frac{Q^{2-\frac{\alpha+\beta}{2}}}{2\pi^{\frac{\alpha+\beta}{2}}}& \sum_{\substack{c\le C \\ (c,hk)=1}} \frac{\mu(c)}{c} 
 \frac{1}{2\pi i} \int_{(-\frac 12+\epsilon)} \zeta(\alpha+\beta-2s) {\mathcal H}(\tfrac 12- \beta+s, 1-\alpha-\beta+2s) \nonumber \\
 &\times c^{-1+\alpha+\beta-2s} 
 \Big( \frac{Q\pi }{hk}\Big)^s \frac{h^{\alpha-\frac 12}}{k^{\frac 12-\beta}} {\mathcal K}_1(-s,\tfrac 12-\beta+s,-2s-1+\alpha+\beta;c,h,k)\nonumber \\
 &\times  {\widetilde V}_{\alpha,\beta}(-s) 
 {\widetilde W}_{\alpha,\beta}(1-\alpha-\beta+s) \frac{ds}{s}. 
 \end{align} 
 Summarizing, the expression in \eqref{8.13} evaluates ${\mathcal U}_{\alpha,\beta}^{2}(h,k)$ up to admissible error terms.  
 
 \section{Proof of the Main Theorem} 
 
 \noindent  Putting together our work from the previous sections, we find that the main terms contributing to $\Delta_{\alpha,\beta}(h,k)$ 
 are 
 $$ 
 {\mathcal D}_{\alpha,\beta}(h,k) + {\mathcal D}_{-\beta,-\alpha}(h,k) + {\mathcal U}_{\alpha,\beta}^{2}(h,k) + {\mathcal U}_{-\beta,-\alpha}^2(h,k). 
 $$ 
 Here the quantity ${\mathcal D}_{\alpha,\beta}(h,k)$ is evaluated in \eqref{4.9}, and the main term for ${\mathcal U}_{\alpha,\beta}^2(h,k)$ 
 is given by \eqref{8.13}, and similar expressions hold for ${\mathcal D}_{-\beta,-\alpha}(h,k)$ and ${\mathcal U}_{-\beta,-\alpha}^{2}(h,k)$.  
 We now show that the terms  ${\mathcal D}_{\alpha,\beta}(h,k)$ and ${\mathcal U}_{-\beta,-\alpha}(h,k)$ may be combined together to yield the 
 residue of the pole at $s=0$ of the expression in \eqref{4.9}, which equals 
 \begin{equation} 
 \label{9.1} 
 \Big(\frac{Q}{\pi}\Big)^{\frac{\alpha+\beta}{2}} \frac{(h,k)^{1+\alpha+\beta}}{2h^{\frac 12+\beta} k^{\frac 12+\alpha}} Q^2 \Phi(hk,1) 
 {\widetilde V}_{\alpha,\beta}(0) {\widetilde W}_{\alpha,\beta}(1) \zeta(1+\alpha+\beta) {\mathcal P}(hk;1,1+\alpha+\beta). 
 \end{equation} 
 Similarly the terms ${\mathcal D}_{-\beta,-\alpha}(h,k)$ and ${\mathcal U}_{\alpha,\beta}^2 (h,k)$ may be combined together 
 to obtain the other main term 
 \begin{equation} 
 \label{9.2} 
 \Big(\frac{Q}{\pi}\Big)^{-\frac{\alpha+\beta}{2}} \frac{(h,k)^{1-\alpha-\beta}}{2h^{\frac 12-\alpha} k^{\frac 12-\beta}} Q^2 \Phi(hk,1) 
 {\widetilde V}_{-\beta,-\alpha}(0) {\widetilde W}_{-\beta,-\alpha}(1) \zeta(1-\alpha -\beta) {\mathcal P}(hk;1,1-\alpha -\beta).
 \end{equation}
 This will complete the proof of our theorem (with the main term given in the form \eqref{1.10}).   
 
 We focus on establishing \eqref{9.2}, with the proof of \eqref{9.1} being entirely analogous.  Our goal is to match the integrand in 
 \eqref{8.13} with the corresponding integrand in \eqref{4.9} for ${\mathcal D}_{-\beta,-\alpha}(h,k)$.    After several initial transformations, 
 we shall move the line of integration in \eqref{8.13} from Re$(s)= -\frac 12 +\epsilon$ to Re$(s)=-\epsilon$, which will then permit 
 the sum over $c\le C$ to be extended to infinity.  Then we will obtain the desired perfect matching.  
 
  Note that 
 \begin{equation} 
 \label{9.3} 
 {\widetilde W}_{\alpha,\beta}(1-\alpha-\beta+s) = \int_0^{\infty} W_{\alpha,\beta}(x)x^{1-\alpha-\beta+s}\frac{dx}{x} = 
{\widetilde W}_{-\beta,-\alpha}(1+s). 
 \end{equation} 
 Next note that (using the definitions \eqref{3.1} and \eqref{7.1}) 
 $$ 
 {\mathcal H}(\tfrac 12 -\beta+s,1-\alpha-\beta+2s) {\widetilde V}_{\alpha,\beta}(-s) = {\widetilde V}_{-\beta, -\alpha}(s) \frac{\sqrt{\pi} \Gamma(\tfrac{\alpha+\beta}{2}-s)}{\Gamma(\tfrac{1-\alpha-\beta}{2} +s)}. 
 $$ 
 Using this together with the functional equation for $\zeta(s)$, we obtain 
 \begin{equation} 
 \label{9.4} 
 \pi^{-\frac{\alpha+\beta}{2}+s} \zeta(\alpha+\beta-2s) {\mathcal H}(\tfrac 12 -\beta+s,1-\alpha-\beta+2s) {\widetilde V}_{\alpha,\beta}(-s) 
 = \pi^{\frac{\alpha+\beta}{2}-s} \zeta(1-\alpha-\beta+2s) {\widetilde V}_{-\beta,-\alpha}(s). 
 \end{equation} 

We now turn to the quantity ${\mathcal K}_1(-s,\frac 12-\beta+s,-2s-1+\alpha+\beta;c,h,k)$, which is defined through \eqref{8.10}, \eqref{8.11}, and \eqref{8.5}. 
These definitions are all in terms of Euler products, and therefore we need only compute the relevant Euler factor for ${\mathcal K}_1$ at each prime $p$.  
This is a straightforward computation, and we give a sketch for a generic prime $p \nmid chk$, and then content ourselves with stating the answers in the other 
cases.  Suppose $p\nmid chk$, and let $\mu$ denote the power of $p$ dividing $m$, and $\nu$ the power of $p$ dividing $n$.   Then the corresponding Euler factor 
for ${\mathcal K}_1(-s,\frac 12 -\beta+s, -2s-1+\alpha+\beta; c, h, k)$ equals 
$$
\Big(1 -\frac{1}{p}\Big)^2 \Big\{ S_1 + S_2 + S_3 \Big\}, 
$$ 
where $S_1$ accounts for terms with $\mu=\nu \ge 0$ (so that $p^{\mu}$ is the power of $p$ dividing $g$), $S_2$ for  terms with $\mu > \nu \ge 0$ (so that $p^{\nu}$ 
is the power of $p$ dividing $g$), and $S_3$ for the terms with $\nu > \mu \ge 0$ (so that $p^{\mu}$ is the power of $p$ dividing $g$).   Recalling \eqref{8.5}, we 
see that 
\begin{align*} 
S_1 &= 1 + \frac{p^{1+2s-\alpha-\beta}-1}{p(p-1)} + \sum_{\mu \ge 1} \frac{1}{p^{\mu(1+\alpha+\beta-2s)} } \Big( 1+ \frac{1}{p^{\alpha+\beta-2s}(p-1)} -\frac{1}{p-1}\Big) \\
&= \Big( 1+ \frac{1}{p^{1+\alpha+\beta-2s}}\Big) \Big( 1- \frac{1}{p^{1+\alpha+\beta-2s}}\Big)^{-1} - \frac{1}{p(p-1)}. 
\end{align*}
Similarly, we see that 
$$ 
S_2 = \Big( 1-\frac{1}{p^{\alpha+\beta-2s}}\Big) \sum_{\mu > \nu\ge 0} \frac{1}{p^{\mu}} \frac{1}{p^{\nu(\alpha+\beta-2s)}} = \frac{1}{(p-1)} \Big( 1-\frac{1}{p^{\alpha+\beta-2s}}\Big) 
\Big( 1- \frac{1}{p^{1+\alpha+\beta-2s}} \Big)^{-1}. 
$$ 
Finally $S_3$ evaluates to be exactly the same as $S_2$.   Putting all these together, we find that the Euler factor for a prime $p\nmid chk$ equals 
\begin{equation} 
\label{9.5} 
\Big(1 -\frac{1}{p }\Big)^2 \Big( \frac{p^2+p-1}{p(p-1)} \Big) = \Big( 1 - \frac{2}{p^2} + \frac 1{p^3} \Big). 
\end{equation} 
For a prime $p|c$ (and so necessarily $p\nmid hk$) the corresponding Euler factor is 
\begin{equation} 
\label{9.6} 
\Big(1-\frac 1p\Big)^2 \Big( 1 + \frac{p^{1-\alpha-\beta+2s}-1}{p(p-1)}\Big) = 
\Big(1-\frac 1p\Big) \Big( 1-\frac 1p -\frac{1}{p^2} + \frac{1}{p^{1+\alpha+\beta-2s}}\Big). 
\end{equation} 
Lastly, suppose $p|hk$ and let $\eta$ denote the power of $p$ dividing $h$, and $\kappa$ the power dividing $k$ (with $\eta+\kappa \ge 1$).  
Here the Euler factor simplifies (after a moderate amount of calculation) to give 
\begin{equation} 
\label{9.7} \Big(1-\frac 1p \Big) p^{\min(\kappa,\eta) (1+2s -\alpha-\beta)}. 
\end{equation}

  Combining \eqref{9.3}, \eqref{9.4}, and the Euler factor computations of \eqref{9.5}, \eqref{9.6}, and \eqref{9.7}, we 
  may recast the integral in \eqref{8.13} as 
  \begin{align} 
  \label{9.8} 
  -\frac{Q^2}{2} \Big( \frac{Q}{\pi}\Big)^{-\frac{\alpha+\beta}{2}} \frac{\phi(hk)}{hk} \frac{(h,k)^{1-\alpha-\beta}}{h^{\frac 12-\alpha} k^{\frac 12-\beta}} 
&\frac{1}{2\pi i} \int_{(-\frac 12+\epsilon)}  \zeta(1-\alpha-\beta+2s) {\widetilde V}_{-\beta,-\alpha}(s) {\widetilde W}_{-\beta,-\alpha}(1 +s) 
\nonumber \\
&\times \Big( \frac{Q}{\pi HK}\Big)^s  \sum_{\substack{c\le C \\ (c,hk)=1}} \frac{\mu(c)}{c^{2-\alpha-\beta+2s}}  \prod_{p\nmid chk} \Big( 1-\frac{2}{p^2} 
+ \frac{1}{p^3}\Big) \nonumber \\
&\times \prod_{p|c} \Big( 1-\frac{2}{p} +\frac{1}{p^3} +\frac{1}{p^{1+\alpha+\beta-2s}} -\frac{1}{p^{2+\alpha+\beta-2s}} \Big) \frac{ds}{s}. 
\end{align}
 We move the line of integration to Re$(s)=-\epsilon$, and extend the sum over $c$ to infinity incurring an error of $\ll Q^{2+\epsilon} \frac{(h,k)}{\sqrt{hk}} C^{-1}$.   The 
 extended sum over $c$ in \eqref{9.8} equals (writing $\gamma = 1-\alpha-\beta+2s$ for brevity) 
 \begin{align*} 
& \sum_{(c,hk)=1} \frac{\mu(c)}{c^{1+\gamma} } \prod_{p\nmid chk} \Big( 1- \frac {2}{p^2} + \frac{1}{p^3} \Big) 
 \prod_{p|c} \Big( 1- \frac 2p + \frac{1}{p^3} + \frac{1}{p^{2-\gamma}} - \frac{1}{p^{3-\gamma}}\Big) 
 \\
 = &\prod_{p \nmid hk} \Big( 1- \frac{2}{p^2} + \frac 1{p^3} - \frac{1}{p^{1+\gamma}} \Big( 1- \frac{2}{p} + 
 \frac{1}{p^3} + \frac{1}{p^{2-\gamma}} - \frac{1}{p^{3-\gamma}} \Big) \Big)  = {\mathcal P}(hk;1,\gamma). 
 \end{align*}
 Therefore  \eqref{9.8} becomes (note that $\Phi(hk,1) = \phi(hk)/(hk)$)
 \begin{align*}
   -\frac{Q^2}{2} \Big( \frac{Q}{\pi}\Big)^{-\frac{\alpha+\beta}{2}} \Phi(hk,1) \frac{(h,k)^{1-\alpha-\beta}}{h^{\frac 12-\alpha} k^{\frac 12-\beta}} 
\frac{1}{2\pi i} \int_{(-\epsilon)}  &\zeta(1-\alpha-\beta+2s) {\widetilde V}_{-\beta,-\alpha}(s) {\widetilde W}_{-\beta,-\alpha}(1 +s)
\\
&\times \Big( \frac{Q}{\pi HK}\Big)^s {\mathcal P}(hk;1,1-\alpha-\beta+2s) \frac{ds}{s}. 
\end{align*}
Adding this expression to the analogue of \eqref{4.9} for ${\mathcal D}_{-\beta,-\alpha}(h,k)$, we see that their combined contribution gives 
precisely the residue of at $s=0$, which establishes \eqref{9.2}.     

Thus we have shown that the main terms for $\Delta_{\alpha,\beta}(h,k)$ are exactly of the shape given in the main theorem.  When averaged 
over $h$ and $k$, the various remainder terms that arose are bounded by 
$$ 
\sum_{h, k \le Q^{\vartheta}} \frac{\lambda_h \overline{\lambda_k}} {\sqrt{hk}} {\mathcal E}_{h,k} \ll \frac{Q^{2+\epsilon}}{C} + CQ^{1+\vartheta+\epsilon}. 
$$ 
Choosing finally $C= Q^{(1-\vartheta)/2}$, the proof of the main theorem is complete.

\end{document}